\numberwithin{equation}{section} 
\numberwithin{figure}{section} 
 \let\footnote=\endnote
\newtheorem{thm}{Theorem}
\newtheorem{thm}{Theorem}[section]
 \theoremstyle{definition}
 \newtheorem{defi}[thm]{Definition}
 \newtheorem{exple}[thm]{Example}
 \theoremstyle{plain}
 \newtheorem{prop}[thm]{Proposition}
 \newtheorem{lem}[thm]{Lemma}
 \newtheorem*{thmnonnum}{Theorem}
 \newtheorem*{propnonnum}{Proposition}
\newcommand{\CP}{C(\mathcal{P})}
\newcommand{\HP}{H(\mathcal{P})}
\newcommand{\HTn}{\widehat{HT_n}}
\newcommand{\HTi}{\widehat{HT_i}}
\newcommand{\HTjpu}{\widehat{HT_{j+1}}}
\newcommand{\HTd}{\widehat{HT_2}}
\newcommand{\HTt}{\widehat{HT_3}}
\newcommand{\HTq}{\widehat{HT_4}}
\newcommand{\HTc}{\widehat{HT_5}}
\newcommand{\htn}{HT_n}
\newcommand{\HHTn}{\mathcal{H}_{\widehat{HT}}}
\newcommand{\intunn}{\llbracket 1,n \rrbracket}
\newcommand{\Spnr}{\mathcal{B}_{HT}}
\newcommand{\hn}{h_n}
\newcommand{\Dr}{\Delta}
\newcommand{\dnap}{d^n_{\alpha,\pi}}
\newcommand{\cnap}{c^n_{\alpha,\pi}}
\newcommand{\cnapp}{c^{n \bullet}_{\alpha,\pi}}
\title{Hypertree posets and hooked partitions}
\author{B\'{e}r\'{e}nice Oger}
\thanks{ Institut Camille Jordan, UMR 5208, Universit\'{e} Claude Bernard Lyon 1 \\
 B\^{a}t. Jean Braconnier nb 101, 43 Bd du 11 novembre 1918, 69622 Villeurbanne Cedex \\
 \textbf{e-mail address:} oger@math. univ-lyon1. fr}
\begin{document}

\begin{abstract}
We adapt here the computation of characters on incidence Hopf algebras introduced by W. Schmitt in the 1990s to a family mixing bounded and unbounded posets. We then apply our results to the family of hypertree posets and partition posets. As a consequence, we obtain some enumerative formulas and a new proof for the computation of the Moebius numbers of the hypertree posets. Moreover, we compute the coproduct of the incidence Hopf algebra and recover a known formula for the number of hypertrees with fixed valency set and edge sizes set. 
\end{abstract}

\maketitle

\textbf{Keywords: Poset, Incidence Hopf algebra, Hypertree, Moebius number}

\tableofcontents

\section*{Introduction}
In 1994, W. Schmitt defined in his article \cite{IHA} the notion of incidence Hopf algebra associated to a given family of posets satisfying some closure conditions. Using the structure of Hopf algebra, one can define a convolution on characters of this algebra. The Moebius number for posets of the family can then be computed using characters on the incidence Hopf algebra. 

However, the incidence Hopf algebras of W. Schmitt are only defined for bounded posets. We introduce in this article a way to compute some characters for another type of posets, called the triangle and diamond posets. The diamond posets are bounded posets whereas the triangle posets have a least element but no greatest one. If we consider the hereditary family generated by the diamond posets and the augmented triangle posets, i.e. the triangle posets with an added greatest element, we can build the associated incidence Hopf algebra $\mathcal{H}$. The coproduct in the bialgebra $\mathcal{B}$ generated by isomorphism classes in the hereditary family obtained from diamond and triangle posets can be linked with the coproduct of the incidence Hopf algebra $\mathcal{H}$: this relation enables us to identify a computation on maps from the bialgebra $\mathcal{B}$ to $\mathbb{Q}$ with the convolution of characters on the incidence Hopf algebra $\mathcal{H}$. The advantage of this method is that the computation of such maps on the bialgebra $\mathcal{B}$ is, in most cases, easier. In the rest of the article, we will apply this theorem to hypertree posets.

\medskip

In the third part of the article, we recall the notion of hypertrees. Hypergraphs have been introduced in $1989$ by C. Berge in \cite{Berge} as a generalization of graphs. Hypertrees are hypergraphs satisfying a kind of connectedness and acyclicity. The set of hypertrees on a vertex set $I$ can be endowed with a partial order given by union of edges. We prove the following criterion on $\alpha_i$ and $\pi_j$ for the existence of a hypertree with $\alpha_i$ vertices of valency $i$ and $\pi_j$ edges of size $j$:
\begin{equation}\label{crit}
\sum_{i=1}^k \alpha_i = n,\quad \sum_{j=2}^{l} (j-1) \pi_j = n-1 \quad \text{and} \quad \sum_{i=1}^k i \alpha_i = n + \sum_{j=2}^{l} \pi_j -1. 
\end{equation} 

Then, using this criterion, we compute the coproduct in the bialgebra $\mathcal{B}_{HT}$ associated with hypertree posets $h_n$ and partition posets $p_n$. The coproduct is given by the following formula:
\begin{thmnonnum} If the set $\mathcal{P}(n)$ is the set of tuples $\alpha=(\alpha_1, \dots, \alpha_k)$ and $\pi=(\pi_2, \dots, \pi_l)$ satisfying Equations \eqref{crit}, the coproduct of $h_n$ in $\mathcal{B}_{HT}$ is given by:
\begin{equation*}
\Delta(h_n)= \frac{1}{n} \times \sum_{(\alpha, \pi) \in \mathcal{P}(n)} \frac{n!}{\prod_{j \geq 2} (j-1)!^{\pi_j} \pi_j!} \times \frac{k! \times n!}{\prod_{i \geq 1} (i-1)!^{\alpha_i} \alpha_i!} \prod_{i=2}^k p_i^{\alpha_i} \otimes \prod_{j=2}^l h_j^{\pi_j},
\end{equation*}
with $k= \sum_{j\geq 2}\pi_j -1$. 
\end{thmnonnum}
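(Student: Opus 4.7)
The plan is to apply the coproduct formula of $\mathcal{B}_{HT}$ (established in the preceding sections) to the class $h_n$, identify the intervals appearing in the resulting sum as products in the bialgebra, count hypertrees by profile $(\alpha,\pi)$, and regroup the sum.

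First, I would invoke the defining coproduct of $\mathcal{B}_{HT}$: this writes $\Delta(h_n)$ as a sum over elements $x$ of a labeled representative of the augmented hypertree poset, each contributing an ``interval $\otimes$ interval'' term. Grouping the sum by the isomorphism type of $x$, parameterized by its profile $(\alpha(x),\pi(x))$, is the organizing idea.

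Second, I would establish the interval decompositions. For a hypertree $x$ on $[n]$ with $\alpha_i(x)$ vertices of valency $i$ and $\pi_j(x)$ edges of size $j$, I would show that
\begin{equation*}
[\hat{0},x] \;\cong\; \prod_{i\geq 2} p_i^{\alpha_i(x)} \quad\text{and}\quad [x,\hat{1}] \;\cong\; \prod_{j\geq 2} h_j^{\pi_j(x)}
\end{equation*}
as classes in $\mathcal{B}_{HT}$. The vertex-indexed factorization of $[\hat{0},x]$ reflects that a coarsening of $x$ is determined by independent choices, at each vertex $v$ of valency $i$, of a partition of the $i$ incident edges into ``merge groups''---contributing a factor $p_i$ per vertex of valency $i$. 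The edge-indexed factorization of $[x,\hat{1}]$ reflects dually that a refinement of $x$ is determined by independent choices, at each edge $e$ of size $j$, of a sub-hypertree on the $j$ vertices of $e$---contributing a factor $h_j$ per edge of size $j$.

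Third, I would count the hypertrees on $[n]$ with fixed profile $(\alpha,\pi) \in \mathcal{P}(n)$. Using the criterion \eqref{crit} to restrict to realizable profiles and a standard multinomial / Cayley-type argument to distribute the $n$ vertices among the edges and across the valencies, the number of labeled hypertrees with profile $(\alpha,\pi)$ would be shown to equal
\begin{equation*}
\frac{1}{n} \cdot \frac{n!}{\prod_{j\geq 2}(j-1)!^{\pi_j}\pi_j!} \cdot \frac{k!\,n!}{\prod_{i\geq 1}(i-1)!^{\alpha_i}\alpha_i!}, \qquad k=\sum_{j\geq 2}\pi_j-1,
\end{equation*}
where the $\frac{1}{n}$ is the symmetry factor characteristic of hypertree enumeration (analogous to the one in Cayley's formula). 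Multiplying by the interval decomposition from Step 2 yields the claimed formula.

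The main obstacle is Step 2: the interval factorizations require a careful structural analysis of the ``union of edges'' order on $HT_n$, in particular the independence of the local choices at different vertices (for the lower interval) and within different edges (for the upper interval). Step 3 is also delicate, as the $\frac{1}{n}$ factor must be precisely tracked when translating between labeled hypertree counts and isomorphism-class contributions to $\mathcal{B}_{HT}$.
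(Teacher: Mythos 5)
Your overall architecture matches the paper's: apply the defining coproduct of $\mathcal{B}_{HT}$, use the McCammond--Meier interval decomposition (the paper simply cites their Lemma 2.5, which is exactly your Step 2: $[\hat{0},x]\cong\prod_i p_i^{\alpha_i}$ and the \emph{half-open} interval $[x,\hat{1})\cong\prod_j h_j^{\pi_j}$ --- note it must be half-open, since $h_j$ has no greatest element), and then reduce everything to counting labelled hypertrees with prescribed profile $(\alpha,\pi)$. Up to that point you are on the paper's track.

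The genuine gap is Step 3, which is where essentially all of the mathematical content of the theorem lives. You assert that the count
\begin{equation*}
c^n_{\alpha,\pi}=\frac{1}{n}\cdot\frac{n!}{\prod_{j\geq 2}(j-1)!^{\pi_j}\pi_j!}\cdot\frac{k!\,n!}{\prod_{i\geq 1}(i-1)!^{\alpha_i}\alpha_i!},\qquad k=\sum_{j\geq 2}\pi_j-1,
\end{equation*}
``would be shown'' by a standard multinomial / Cayley-type argument, but no such argument is supplied, and a naive distribution of vertices among edges does not produce this formula: one must explain both the factor $k!$ (whose exponent is the number of edges minus one, i.e.\ the length of a Pr\"ufer-type word) and the precise interaction between the two multinomial-like factors. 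The paper's proof requires three separate lemmas: first, rooting the hypertrees (which is where the $1/n$ actually comes from, as $c^n_{\alpha,\pi}=c^{n\bullet}_{\alpha,\pi}/n$) and mapping each rooted hypertree to a $\pi$-hooked partition; second, an orbit--stabilizer argument showing the fibre of this map has constant size and extracting the factor $n!/\prod_j(j-1)!^{\pi_j}\pi_j!$; third, a Pr\"ufer-code bijection identifying the fibre with a set of words of length $k$ on $\llbracket 1,n\rrbracket$ in which $\alpha_i$ letters appear $i-1$ times, followed by a count of those words. None of these ideas (hooked partitions, the fibre/stabilizer analysis, the Pr\"ufer encoding) appears in your proposal, so as written the key enumeration is asserted rather than proved. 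To complete the argument you would need either to carry out this construction or to import the equivalent enumeration of Bousquet-M\'elou--Chapuy or Bacher with a precise statement and a verification that it matches the required coefficient.
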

This formula is linked with the number of hypertrees of fixed valencies and edge sizes sets, which was also computed by M. Bousquet-Mélou and G. Chapuy in \cite{BMC} (see also \cite{Stanley2001}), in terms of bicoloured trees, and by R. Bacher in \cite{BacHyp} in terms of hypertrees.
The first step of this computation is to show that it can be reduced to the computation of the number of ways to build a hypertree from a $\pi$-hooked partition. Then we show with a proof using a Pr\"{u}fer code that this construction is encoded by words. Finally, we count these words. 

\medskip

The Moebius number of the poset of hypertrees on $n$ vertices has been computed by J. McCammond and J. Meier in $2004$ in the article \cite{McCM}. F. Chapoton has computed its characteristic polynomials in \cite{ChHyp} and has conjectured the action of the symmetric group on the homology of the hypertree poset, which has been proven in \cite{mar1}. We give a new proof for the computation of the Moebius number of the hypertree posets at the end of the article. This computation gives the following enumerative formula:
\begin{propnonnum}The following equality holds:
\begin{equation*}
(n-1)^{n-2} = \sum_{(\alpha, \pi) \in \mathcal{P}(n)} \frac{(-1)^{i \alpha_i-1}}{n} \times \frac{n!}{\prod_{j \geq 2} (j-1)!^{\pi_j} \pi_j!} \times \frac{k! \times n!}{\prod_{i \geq 1} \alpha_i!},
\end{equation*}
where $\mathcal{P}(n)$ is the set of pairs $(\alpha_i,\pi_j)$ satisfying Equations \eqref{crit}.
\end{propnonnum}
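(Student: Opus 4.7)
My plan is to evaluate the Moebius number of $\widehat{HT_n}$ by applying the character-computation theorem of the paper to the explicit coproduct of $h_n$ in $\mathcal{B}_{HT}$, and then to recognize the result as $(n-1)^{n-2}$ via the classical formula of McCammond and Meier.

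First, the convolution identity $\mu * \zeta = \epsilon$ on the incidence Hopf algebra, combined with the link established earlier between $\mathcal{B}_{HT}$ and $\mathcal{H}_{\widehat{HT}}$, allows $\mu(\widehat{HT_n})$ to be expressed by applying the Moebius character to the left tensor factor of $\Delta(h_n)$ and the trivial (zeta) character on the right. Since the Moebius number of the partition poset is $\mu(\Pi_i) = (-1)^{i-1}(i-1)!$, this evaluation gives
$$\mu\bigl(\textstyle\prod_i p_i^{\alpha_i}\bigr) \;=\; (-1)^{\sum_i (i-1)\alpha_i}\,\prod_i (i-1)!^{\alpha_i},$$
and the factor $\prod_i (i-1)!^{\alpha_i}$ precisely cancels the matching term in the denominator of the coproduct coefficient, yielding $\prod_i \alpha_i!$ in the denominator as in the stated formula.

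Second, I would use the criterion \eqref{crit}, specifically $\sum_i i\alpha_i - n = \sum_j \pi_j - 1$, to convert the sign. The exponent $\sum_i(i-1)\alpha_i = \sum_i i\alpha_i - n$ differs from the stated exponent $\sum_i i\alpha_i - 1$ by a global factor of $(-1)^{n-1}$, and this is precisely the sign needed to pass from $\mu(\widehat{HT_n}) = (-1)^{n-1}(n-1)^{n-2}$ (McCammond--Meier) to the unsigned expression $(n-1)^{n-2}$ on the left-hand side. Rearranging the convolution equation, isolating the degenerate boundary terms in the coproduct sum that correspond to $x = \hat{0}$ and $x = \hat{1}$, and combining with the Moebius values above then yields exactly the identity of the statement.

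The main obstacle will be the careful tracking of signs: the $(-1)^{i-1}$ contributed by each $\mu(\Pi_i)$, the $(-1)^{n-1}$ absorbed in the identification with $(n-1)^{n-2}$, and any sign arising from the convolution or from passing between $\mathcal{B}_{HT}$ and the incidence Hopf algebra $\mathcal{H}_{\widehat{HT}}$, must all combine coherently. The verification that no extraneous global factor intrudes amounts to checking the boundary contributions of the convolution, which correspond exactly to the degenerate endpoints in the sum $\sum_{x \in \widehat{HT_n}} \mu([\hat{0},x]) = 0$. Once this bookkeeping is complete, the numerical identity follows at once from the coproduct formula and the classical value of $\mu(\Pi_i)$.
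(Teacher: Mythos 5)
Your plan follows the same route as the paper: apply $\mu \ast \zeta = \epsilon$ to $\widehat{h_n}$, transfer the computation to $\mathcal{B}_{HT}$ via Proposition \ref{propdec}, substitute the explicit coproduct of Theorem \ref{thmcoprod} together with $\mu(\Pi_i)=(-1)^{i-1}(i-1)!$, cancel the $(i-1)!^{\alpha_i}$ against the denominator, and convert the signs using the criterion and the McCammond--Meier value $\mu(\widehat{HT_n})=(-1)^{n-1}(n-1)^{n-2}$. So the approach itself is not in question.

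The gap sits exactly in the step you defer to ``bookkeeping'': your sign accounting, as sketched, is missing one factor of $-1$ and therefore does not close. Unpacking $\mu\ast\zeta(\widehat{h_n})=0$ through Proposition \ref{propdec} with $\epsilon_\zeta=1$ and $\epsilon_\mu=-1$ (because $\mu(\widehat{h_n})=-s(h_n)=-\widetilde{\mu}(h_n)$) gives $s(h_n)=\sum\widetilde{\mu}(h_n^{(1)})$, hence $\mu(\widehat{HT_n})=-\sum_{(\alpha,\pi)}c^n_{\alpha,\pi}\,(-1)^{\sum_i(i-1)\alpha_i}\prod_i(i-1)!^{\alpha_i}$; note the leading minus sign, which is precisely the boundary contribution you mention but never evaluate. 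Your second paragraph implicitly uses $\mu(\widehat{HT_n})=+\sum c^n_{\alpha,\pi}\,\mu(p_\alpha)$, balances only the $(-1)^{n-1}$ from McCammond--Meier against the $(-1)^{-n}$ from $\sum_i(i-1)\alpha_i=\sum_i i\alpha_i-n$, and so lands on the exponent $\sum_i i\alpha_i-1$. Carrying the extra $-1$ through, the correct exponent is $\sum_i i\alpha_i$. The fact that your version reproduces the printed statement is not a confirmation: the statement as printed is itself off by a global sign, as the case $n=4$ shows --- the four terms of the printed right-hand side are $-1,\,+12,\,-12,\,-8$, summing to $-9=\mu(\widehat{HT_4})$, while the left-hand side is $3^2=9$ (the paper's own example after the proposition computes exactly these terms and labels the total $\mu(\widehat{HT_4})$). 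So the one genuinely delicate point of the argument --- combining the three signs coming from $\mu(\Pi_i)$, from the augmentation $\mu(\widehat{h_n})=-s(h_n)$, and from $(-1)^{n-1}(n-1)^{n-2}$ --- must be written out explicitly; as it stands your proposal drops one of them, and agreement with the stated formula cannot serve as the check.
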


\section{Generalities on posets and incidence Hopf algebras}

We introduce in this section some general notions on posets and incidence Hopf algebras which will be needed in this article. 

	\subsection{Generalities on posets}

A \emph{poset} is a set endowed with a partial order $\leq$. We called \emph{trivial} the poset which has only one element. If $P$ is a poset in which $x \leq y$, then the \emph{interval} $[x,y]$ is the set $\{z \in P: x \leq z \leq y\}$ and the \emph{half-open interval} $[x,y)$ is the set $\{z \in P: x \leq z < y\}$. If $P$ is an interval, it is said to be a \emph{bounded} poset. In this case, its least and greatest elements will be respectively denoted by $\hat{0}_P$ and $\hat{1}_P$, or $\hat{0}$ and $\hat{1}$ if there is no ambiguity. 

Let us define the following poset invariant: 

\begin{defi} The \emph{Moebius function} $\mu$ is recursively defined on a poset $P$ by \label{defmoeb}
\begin{eqnarray*}
\mu(x,x)=1, & \forall x \in P \\
\mu(x,y)=-\sum_{x \leq z < y} \mu(x,z), & \forall x<y \in P. 
\end{eqnarray*}

The \emph{Moebius invariant}, or \emph{Moebius number}, of a bounded poset $P$ is defined as: 
\begin{equation*}
\mu(P):=\mu(\hat{0}_P, \hat{1}_P). 
\end{equation*}

\end{defi}

\begin{exple} The Moebius number of the poset $B_n$ of subsets of $\intunn$, ordered by inclusion, is $(-1)^n$. 
\end{exple}
	\subsection{Generalities on incidence Hopf algebra}

	All the definitions recalled here are extracted from the article of W. Schmitt \cite{IHA}. 

	 A family of posets $\mathcal{P}$ is \emph{interval closed}, if it is non-empty and, for all $P \in \mathcal{P}$ and $x \leq y \in P$, the interval $[x,y]$ belongs to $\mathcal{P}$. An \emph{order compatible relation} on an interval closed family $\mathcal{P}$ is an equivalence relation $\sim$ such that $P \sim Q$ if and only if there exists a bijection $\phi: P \rightarrow Q$ such that $[0_P,x] \sim [0_Q,\phi(x)]$ and $[x, 1_P] \sim [\phi(x), 1_Q]$, for all $x \in \mathcal{P}$. The isomorphism of posets is an example of order compatible relation. 
	
	Given $K$ a commutative ring with a unit, and $\sim$ an order compatible relation on an interval closed family $\mathcal{P}$, we consider the quotient set $\mathcal{P}/\sim$ and denote by $[P]$ the $\sim$-equivalence class of a poset $P \in \mathcal{P}$. We define a $K$-coalgebra $\CP$ as follow: 
	
\begin{prop}[Theorem 3.1 in \cite{IHA}]
Let $\CP$ denote the free $K$-module generated by $\mathcal{P}/\sim$. We define linear maps $\Delta: \CP \rightarrow \CP \otimes \CP$ and $\epsilon: \CP \rightarrow K$ by: 
\begin{equation*}
\Delta[P]=\sum_{x \in P} [0_P,x] \otimes [x,1_P]
\end{equation*}
and
\begin{equation*}
\epsilon[P]=\delta_{|P|,1},
\end{equation*}
where $\delta_{i,j}$ is the Kronecker symbol. 
Then, $\CP$ is a coalgebra with comultiplication $\Delta$ and counit $\epsilon$. 
\end{prop}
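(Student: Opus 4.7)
The plan is to verify in turn that both $\Delta$ and $\epsilon$ descend to well-defined maps on $\CP$, that $\Delta$ is coassociative, and that $\epsilon$ is a counit. All three checks are essentially formal consequences of the definition of an order compatible relation and of the fact that intervals in intervals are intervals.

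First I would handle well-definedness. Suppose $P \sim Q$ in $\mathcal{P}$. By hypothesis there is a bijection $\phi: P \to Q$ with $[0_P, x] \sim [0_Q, \phi(x)]$ and $[x, 1_P] \sim [\phi(x), 1_Q]$ for every $x \in P$. Reindexing the sum $\sum_{x \in P} [0_P, x] \otimes [x, 1_P]$ through $x \mapsto \phi(x)$ shows that $\Delta[P] = \Delta[Q]$ in $\CP \otimes \CP$. Similarly $|P| = |Q|$ by the bijection, so $\epsilon[P] = \epsilon[Q]$.

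Next, coassociativity. I would apply the two maps to $[P]$ and reindex the resulting double sums. On the one hand,
\begin{equation*}
(\Delta \otimes \mathrm{id}) \Delta[P] = \sum_{x \in P} \sum_{y \in [0_P, x]} [0_P, y] \otimes [y, x] \otimes [x, 1_P],
\end{equation*}
while on the other,
\begin{equation*}
(\mathrm{id} \otimes \Delta) \Delta[P] = \sum_{y \in P} \sum_{x \in [y, 1_P]} [0_P, y] \otimes [y, x] \otimes [x, 1_P].
\end{equation*}
Both sums run over pairs $y \leq x$ in $P$, and the summands coincide term by term; the key point to spell out is that the sub-interval $[0_{[0_P,x]}, y]$ of the interval $[0_P, x]$ equals $[0_P, y]$ as a sub-poset of $P$, which is immediate from the definition of the order restricted to intervals.

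Finally, for the counit axiom, I would observe that $\epsilon[0_P, x]$ vanishes unless $[0_P, x]$ is a singleton, i.e.\ unless $x = 0_P$, in which case $[x, 1_P] = [P]$. Thus $(\epsilon \otimes \mathrm{id}) \Delta[P] = [P]$, and symmetrically $(\mathrm{id} \otimes \epsilon) \Delta[P] = [P]$ by considering the term $x = 1_P$. The most delicate point in the whole argument is the bookkeeping for well-definedness on equivalence classes, and the definition of order compatible relation is designed precisely so that this bookkeeping works without additional hypotheses; past that, nothing beyond careful indexing is required.
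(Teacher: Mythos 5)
Your proof is correct: well-definedness on $\sim$-classes via the bijection $\phi$, coassociativity by reindexing over pairs $y \leq x$ using that $[0_{[0_P,x]},y]=[0_P,y]$ (with interval-closure guaranteeing all terms lie in $\mathcal{P}$), and the counit check isolating the terms $x=0_P$ and $x=1_P$ is exactly the standard argument. The paper itself offers no proof here --- it imports this statement verbatim as Theorem~3.1 of Schmitt's article --- and your argument is essentially the one given there, so there is nothing to contrast.
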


	The \emph{direct product} of posets $P_1$ and $P_2$ is the cartesian product $P_1 \times P_2$ partially ordered by the relation $(x_1,x_2) \leq (y_1,y_2)$ if and only if $x_i \leq y_i$ in $P_i$, for $i=1,2$. A \emph{hereditary family} is an interval closed family which is also closed under formation of direct products. Let $\sim$ be an order compatible relation on $\mathcal{P}$ which is also a semigroup congruence, i.e., whenever $P \sim Q$ in $\mathcal{P}$, then $P \times R \sim Q \times R$ and $R \times P \sim R \times Q$, for all $R \in \mathcal{P}$. This relation is \emph{reduced} if whenever $|R|=1$, then $P \times R \sim R \times P \sim P$. These hypotheses assure that product will be well defined on the quotient. An order compatible relation on a hereditary family $\mathcal{P}$ which is also a reduced congruence is called a \emph{Hopf relation} on $\mathcal{P}$. The isomorphism of posets is a Hopf relation.
	
	\begin{prop}[\cite{AIC}] Let $\sim$ be a Hopf relation on a hereditary family $\mathcal{P}$. Then $\HP=(\CP, \times, \Delta, \epsilon,S)$ is a Hopf algebra over $K$. 
	\end{prop}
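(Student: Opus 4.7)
The plan is to verify the Hopf algebra axioms for $\HP$ step by step, exploiting the interaction between the Hopf relation $\sim$ and the direct product of posets.

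First, I would equip $\CP$ with an algebra structure by setting $[P]\cdot[Q]:=[P\times Q]$ on $\mathcal{P}/{\sim}$ and extending $K$-linearly. That this is well-defined on the quotient is precisely the semigroup congruence hypothesis on $\sim$. Associativity (and commutativity) follow from the canonical isomorphisms for cartesian products of posets, and the class of the trivial poset serves as a two-sided unit by the \emph{reduced} condition. Hence $(\CP,\times)$ is a commutative associative unital $K$-algebra.

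Second, I would check that the coproduct $\Delta$ and counit $\epsilon$ from the previous proposition are algebra homomorphisms, turning $\CP$ into a bialgebra. For $\epsilon$, compatibility is immediate since $|P\times Q|=1$ iff $|P|=|Q|=1$. For $\Delta$, the crucial observation is the structure of intervals in a direct product: for any $(x,y)\in P\times Q$ one has
\begin{equation*}
[\0_{P\times Q},(x,y)] = [\0_P,x]\times[\0_Q,y], \qquad [(x,y),\1_{P\times Q}] = [x,\1_P]\times[y,\1_Q].
\end{equation*}
Summing over $(x,y)\in P\times Q$ and regrouping the tensor factors yields $\Delta[P\times Q]=\Delta[P]\cdot\Delta[Q]$ in $\CP\otimes\CP$, which is the required multiplicativity.

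Third, I would produce the antipode. The bialgebra $\CP$ is graded by $\deg[P]=|P|-1$, and the class of the trivial poset is the unique element of degree $0$; hence $\CP$ is graded and connected. The standard existence theorem for graded connected bialgebras then furnishes a unique antipode $S$, defined recursively as the convolution inverse of the identity.

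The principal technical obstacle lies in step two: one must verify that the identifications of intervals in $P\times Q$ as products of intervals descend to the quotient by $\sim$, i.e., that the equivalence class of a product of intervals depends only on the classes of its factors. This is precisely guaranteed by combining the order compatibility axiom (intervals respect $\sim$) with the semigroup congruence property (direct products respect $\sim$), so that the computation of $\Delta[P\times Q]$ carried out at the level of representatives yields a well-defined identity in $\CP\otimes\CP$.
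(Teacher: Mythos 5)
The paper offers no proof of this statement (it is quoted from Schmitt's work), so your attempt can only be judged on its own terms. Your first two steps are correct and are exactly the standard argument: well-definedness of the product from the congruence hypothesis, the unit from the reduced condition, multiplicativity of $\epsilon$ from $|P\times Q|=|P|\,|Q|$, and multiplicativity of $\Delta$ from the factorization $[\0_{P\times Q},(x,y)]=[\0_P,x]\times[\0_Q,y]$ together with the order-compatibility of $\sim$ on intervals. So $\CP$ is a bialgebra.

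The gap is in your third step. The function $\deg[P]=|P|-1$ is \emph{not} a bialgebra grading: under the product one has $\deg([P]\cdot[Q])=|P|\,|Q|-1$, which equals $\deg[P]+\deg[Q]=|P|+|Q|-2$ only when one factor is trivial, and the coproduct fails too (for the Boolean lattice $B_2$ of degree $3$, the middle terms $[\0,a]\otimes[a,\1]$ of $\Delta[B_2]$ have total degree $2$). So you cannot invoke the graded-connected existence theorem as stated. The argument is salvageable, but you must replace the grading by a \emph{filtration}, e.g.\ by the length $\ell(P)$ of the longest chain: then $\ell(P\times Q)=\ell(P)+\ell(Q)$, the filtration degree $0$ part is spanned by the class of the trivial poset, and
\begin{equation*}
\Delta[P]=[P]\otimes 1+1\otimes[P]+\sum_{\0_P<x<\1_P}[\0_P,x]\otimes[x,\1_P],
\end{equation*}
where both factors in the residual sum have strictly smaller filtration degree. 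The existence theorem for connected \emph{filtered} bialgebras (or the explicit recursive/chain-sum formula for $S$ given by Schmitt) then produces the antipode. With that correction your proof is complete.
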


\begin{exple}
The incidence Hopf algebra generated by the family of poset of subsets of $\intunn$ is the polynomial algebra $K[x]$, endowed with the following coproduct: 
\begin{equation*}
\Delta(x^n)=\sum_{k=0}^{n} \binom{n}{k} x^k \otimes x^{n-k}. 
\end{equation*}
\end{exple}

We deal here with the field $K=\mathbb{Q}$. 

We can consider the set of $\mathbb{Q}$-linear homomorphisms of algebras between $\HP$ and $\mathbb{Q}$, which send the trivial poset to the unit of $\mathbb{Q}$. These homomorphisms are called \emph{characters}. The set of characters can be endowed with a structure of group as follows. Given two characters $\phi$ and $\psi$, the convolution of $\phi$ and $\psi$ is defined on any element $P$ of $\HP$ by:
\begin{equation*}
\phi \ast \psi (P)=\sum \phi(P_{(1)})\psi(P_{(2)}),
\end{equation*}
where $\Delta (P) = \sum P_{(1)} \otimes P_{(2)} $, using Sweedler's convention. The unit of this group is the counit of the Hopf algebra $\HP$. 

		\section{Incidence Hopf algebra of triangle and diamond posets}

			\subsection{Presentation of the triangle and diamond posets and their incidence Hopf algebra}

\label{DP}
Let us consider the family $F_0$ generated by posets $\{(d_i)_{i \geq 1}, (t_j)_{j \geq 3}\}$, such that $d_1$ is the trivial poset, $d_i$ is an interval for all $i \geq 2$ and $t_j$ is a poset with a least element but without a greatest one. The $(d_i)_{i \geq 1}$ will be called the diamond posets and the $(t_j)_{j \geq 2}$ will be called the triangle posets. We denote by $\widehat{t_j}$ the augmented triangle posets, bounded by the addition of a greatest element $\hat{1}$. We moreover assume that: 
\begin{itemize}
\item any closed interval in a diamond poset can be written as a product of diamond posets, (Decomposition Property 1)
\item any closed interval in a triangle poset can be written as a product of diamond posets, (Decomposition Property 2 a)
\item and any half-open interval $[t,\hat{1})$ in an augmented triangle poset $\widehat{t_j}$ can be written as a product of triangle or trivial posets. (Decomposition Property 2 b)
\end{itemize}

We denote by $F_1$ the hereditary family generated by $F_0$: due to Decomposition properties this family is constituted by direct products of diamond and triangle posets. As diamond and triangle posets admit a least element, all posets in $F_1$ have a least element, some of them are intervals but others are not. We construct a hereditary family of intervals from this family.

To apply Schmitt's construction, we now consider the hereditary family $F_2$ generated by the elements of the family $F_1$, augmented with a maximal element when they are not intervals: $F_2$ is then a hereditary family of intervals. We apply Schmitt's construction to this family taking the isomorphism of posets as a reduced order compatible relation to obtain the incidence Hopf algebra $\mathcal{H}_{\diamond, \triangledown}$. We show that, under some assumptions, the calculus of some characters on $\mathcal{H}_{\diamond, \triangledown}$ can be reduced to a calculus in a smaller algebra. 

			\subsection{A smaller bialgebra constructed on triangle and diamond posets} \label{bialg}

The family $F_1$ is closed under the direct product and interval closed, in the sense that any closed or half-open interval of a poset of the family belongs to the family. We construct a bialgebra from this family in the same way as Schmitt constructs an incidence Hopf algebra from a hereditary family of intervals.

Taking the isomorphism of posets as a Hopf relation $\sim$, the set $\tilde{F_1}=F_1/\sim$ is a monoid, with product induced by direct product of posets and identity element $1$ equal to the class of any one point interval. Let us denote by $V(F_1)$ the free $K$-module generated by $\tilde{F_1}$. The structure of monoid on $\tilde{F_1}$ induces a structure of algebra on $V(F_1)$, isomorphic to the monoid algebra of $\tilde{F_1}$ over $K$. As $F_1$ is the set of monomials on triangle and diamond posets of $F_0$, the algebra $V(F_1)$ is generated by isomorphism classes of triangle and diamond posets of $F_0$. All elements of $F_1$ have a least element. We endow it with the following coproduct defined on $d$, an isomorphism class of posets with both a least and a greatest elements, and $t$, an isomorphism class of 
posets with a least element but no greatest one, by:
\begin{equation*}
\Delta(d)=\sum_{x \in d} [\hat{0}_{d}, x] \otimes [x,\hat{1}_{d}], 
\end{equation*}
and 
\begin{equation*}
\Delta(t)= \sum _{x \in t} [\hat{0}_{t}, x] \otimes [x,\hat{1}_{\widehat{t}}).
\end{equation*}
This coproduct is a morphism of algebras, as an interval in a product of posets can be seen as a product of intervals.

Let us remark that diamond posets are intervals whereas triangle posets are not: these two types of posets thus cannot belong to the same isomorphism class. We denote by $\mathcal{B}_{\diamond, \triangledown}$ the obtained bialgebra.

\medskip

The subalgebra $\mathcal{D}_\diamond$ generated by the diamond posets is also a subcoalgebra according to the first decomposition property: this is a subbialgebra of $\mathcal{B}_{\diamond, \triangledown}$. This subbialgebra is isomorphic as a bialgebra to the incidence Hopf algebra of diamond posets. According to the second decomposition property, the subalgebra $\mathcal{T}_\triangledown$ of $\mathcal{B}_{\diamond, \triangledown}$ generated by the triangle posets is a right comodule over $\mathcal{D}_\diamond$. Thanks to the definition of the coproduct and the structure of direct product of posets, the coproduct on $\mathcal{T}_\triangledown$ is still a homomorphism of algebras. 

\begin{figure}
\begin{tikzpicture}[scale=0. 5]
\draw (-2,0) -- (0,-2) -- (2,0) -- (0,2) -- (-2,0);
\draw[fill=gray] (-1. 5,-0. 5) -- (0,-2) -- (1. 5,-0. 5) -- (0,1) -- (-1. 5,-0. 5);
\draw[fill=gray] (0,1) -- (0. 5,1. 5) -- (0,2) -- (-0. 5,1. 5) -- (0,1);
\draw(0,1) node{$\bullet$};
\end{tikzpicture}
\begin{tikzpicture}[scale=0. 5]
\draw (-3,1) -- (0,-2) -- (3,1) -- (-3,1);
\draw[fill=gray] (-1,-1) -- (0,-2) -- (1,-1) -- (0,0) -- (-1,-1);
\draw[fill=gray] (0,0) -- (1,1) -- (-1,1) -- (0,0);
\draw(0,0) node{$\bullet$};
\end{tikzpicture}
\caption{Intervals in diamond and triangle posets: all of them are products of diamond posets, except half-open upper intervals in triangle posets which are products of triangle posets.}
\end{figure}
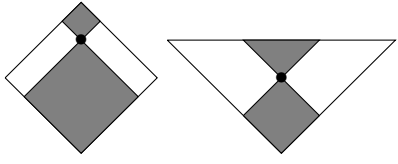

We show that the computation of some characters on the incidence Hopf algebra $\mathcal{H}_{\diamond, \triangledown}$ can be reduced to some calculus on the bialgebra $\mathcal{B}_{\diamond, \triangledown}$. 

			\subsection{Results on the computation of some characters on the incidence Hopf algebra of triangle and diamond posets}

We define the following linear application: 
\begin{equation*}
\lambda: \mathcal{B}_{\diamond, \triangledown} \rightarrow \mathcal{H}_{\diamond, \triangledown}
\end{equation*}
which sends an isomorphism class $c_{d_i}$ of a diamond poset $d_i$ in $\mathcal{B}_{\diamond, \triangledown}$ to the isomorphism class $c_{d_i}$ of $d_i$ in $\mathcal{H}_{\diamond, \triangledown}$ and which sends an isomorphism class $c_{t_j}$ of a triangle poset $t_j$ in $\mathcal{B}_{\diamond, \triangledown}$ to the isomorphism class $c_{\widehat{t_j}}$ of the augmented triangle poset $\widehat{t_j}$ in $\mathcal{H}_{\diamond, \triangledown}$.

This linear application is well defined. Indeed, if two diamond posets are in the same isomorphism class in $\mathcal{B}_{\diamond, \triangledown}$, then they are isomorphic so they are also in the same isomorphism class in $\mathcal{H}_{\diamond, \triangledown}$. If two triangle posets are in the same isomorphism class in $\mathcal{B}_{\diamond, \triangledown}$, then they are isomorphic so these posets augmented with a greatest element are also isomorphic, and thus in the same isomorphism class in $\mathcal{H}_{\diamond, \triangledown}$.

We would like to compute some characters on the isomorphism classes of the diamond posets $d_i$ of $F_0$ and augmented triangle posets $\widehat{t_j}$ coming from triangle posets $t_j$ of $F_0$ in $\mathcal{H}_{\diamond, \triangledown}$. As $F_0$ is a subfamily of $F_1$, to any element of $F_0$ corresponds an isomorphism class in $\mathcal{B}_{\diamond, \triangledown}$ which is sent to the isomorphism class of the corresponding element in $\mathcal{H}_{\diamond, \triangledown}$ : the elements on which we want to compute characters belong to the image of $\lambda$. 

We moreover remark that the fibre of an isomorphism class in the image of $\lambda$ is made of at most one isomorphism class of triangle posets and at most one isomorphism class of diamond poset. Indeed, if two isomorphism classes of triangle posets, or two isomorphism classes of diamond posets, are sent by $\lambda$ to the same isomorphism class, then these isomorphism classes are equal. 

Let us consider two characters $\alpha$ and $\beta$ on $\mathcal{H}_{\diamond, \triangledown}$ such that there exists two rational  numbers $\epsilon_\alpha$, $\epsilon_\beta$ and two maps $\widetilde{\alpha}$ and $\widetilde{\beta}$ from $\mathcal{B}_{\diamond, \triangledown}$ to $\mathbb{Q}$ which satisfy: 
\begin{equation*}
\alpha (\lambda(c_{d_i})) = \widetilde{\alpha} (c_{d_i}), \text{ } \alpha (\lambda(c_{t_j})) = \epsilon_\alpha \widetilde{\alpha} (c_{t_j}),
\end{equation*}
and
\begin{equation*}
\beta (\lambda(c_{d_i})) = \widetilde{\beta} (c_{d_i}), \text{ } \beta (\lambda(c_{t_j})) = \epsilon_\beta \widetilde{\beta} (c_{t_j}),
\end{equation*}
for all isomorphism class $c_{d_i}$ of diamond poset $d_i$ of $F_0$ and $c_{t_j}$ of triangle poset $t_j$ of $F_0$.

Then, the convolution of $\alpha$ and $\beta$ can be computed thanks to the following theorem:
\begin{thm} \label{thmconv}
The convolution of the characters $\alpha$ and $\beta$ on $\mathcal{H}_{\diamond, \triangledown}$ is given by:
\begin{equation*}
\alpha \ast \beta (\lambda(c_{d_i})) = \sum \widetilde{\alpha}(c_{d_i}^{(1)}) \widetilde{\beta}(c_{d_i}^{(2)}),
\end{equation*}
and 
\begin{equation*}
\alpha \ast \beta (\lambda(c_{t_j})) = \epsilon_\beta \sum \widetilde{\alpha}(c_{t_j}^{(1)}) \widetilde{\beta}(c_{t_j}^{(2)}) + \epsilon_\alpha \widetilde{\alpha}(c_{t_j}),
\end{equation*}
where $\Delta(c_{d_i})=\sum c_{d_i}^{(1)} \otimes c_{d_i}^{(2)}$ and $\Delta(c_{t_j})=\sum c_{t_j}^{(1)} \otimes c_{t_j}^{(2)}$ in $\mathcal{B}_{\diamond, \triangledown}$. 
\end{thm}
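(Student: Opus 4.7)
The plan is to expand $(\alpha \ast \beta)(\lambda(c))$ directly using the convolution formula in $\mathcal{H}_{\diamond, \triangledown}$ and to re-express every factor in terms of the coproduct in the smaller bialgebra $\mathcal{B}_{\diamond, \triangledown}$, together with the defining relations involving $\epsilon_\alpha$, $\epsilon_\beta$, $\widetilde{\alpha}$ and $\widetilde{\beta}$.

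For the diamond case $c = c_{d_i}$, one has $\lambda(c_{d_i}) = c_{d_i}$ in $\mathcal{H}_{\diamond, \triangledown}$, and the incidence coproduct yields
\[
(\alpha \ast \beta)(c_{d_i}) = \sum_{x \in d_i} \alpha([\hat{0},x]) \beta([x,\hat{1}]).
\]
Decomposition Property 1 ensures that each $[\hat{0},x]$ and $[x,\hat{1}]$ is a product of diamond posets, so the sum is indexed by exactly the same data as $\Delta(c_{d_i})$ in $\mathcal{B}_{\diamond, \triangledown}$. Since $\alpha$ and $\widetilde{\alpha}$ (respectively $\beta$ and $\widetilde{\beta}$) agree on isomorphism classes of diamond posets and both sides extend multiplicatively to products of diamonds, each term can be rewritten with tildes, giving the first formula.

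For the triangle case $c = c_{t_j}$, apply the convolution formula in $\mathcal{H}$ to $\lambda(c_{t_j}) = c_{\widehat{t_j}}$ and split the sum over $x \in \widehat{t_j}$ into the two cases $x = \hat{1}$ and $x \in t_j$. The value $x = \hat{1}$ produces the isolated term $\alpha(c_{\widehat{t_j}}) \beta(1) = \alpha(\lambda(c_{t_j})) = \epsilon_\alpha \widetilde{\alpha}(c_{t_j})$, matching the second summand of the stated formula. For $x \in t_j$, Decomposition Property 2(a) identifies $[\hat{0},x]$ with a product of diamonds, so $\alpha([\hat{0},x]) = \widetilde{\alpha}([\hat{0},x])$ exactly as in the diamond case; Decomposition Property 2(b) identifies the half-open interval $[x,\hat{1}_{\widehat{t_j}})$ with a product of triangle posets, and the closed interval $[x,\hat{1}]$ in $\widehat{t_j}$ is this very product augmented with a single top. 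These $x \in t_j$ are exactly those indexing the coproduct $\Delta(c_{t_j})$ in $\mathcal{B}_{\diamond, \triangledown}$.

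The main obstacle, and the step that deserves the most care, is matching $\beta([x,\hat{1}])$ with $\epsilon_\beta \widetilde{\beta}(c_{t_j}^{(2)})$ term by term: the single factor $\epsilon_\beta$ on the right reflects the fact that the closed interval on the $\mathcal{H}$-side has been augmented by exactly one top element, regardless of how many triangle factors appear in the corresponding half-open interval on the $\mathcal{B}$-side. Once this identification is justified componentwise, summing over $x \in t_j$ yields $\epsilon_\beta \sum \widetilde{\alpha}(c_{t_j}^{(1)}) \widetilde{\beta}(c_{t_j}^{(2)})$ and completes the proof. The heart of the argument is therefore not the combinatorial bookkeeping supplied by the decomposition properties, but the careful comparison between $\beta$ on an augmented product of triangles in $\mathcal{H}$ and $\widetilde{\beta}$ on the corresponding product of triangles in $\mathcal{B}$, which must cost precisely one factor $\epsilon_\beta$ and not one per triangle factor.
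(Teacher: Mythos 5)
Your proposal is correct and follows essentially the same route as the paper: expand the convolution via the incidence coproduct of $\lambda(c)$ in $\mathcal{H}_{\diamond, \triangledown}$, split off the term $x=\hat{1}$ (which gives $\epsilon_\alpha\widetilde{\alpha}(c_{t_j})$), and match the remaining terms with the coproduct in $\mathcal{B}_{\diamond, \triangledown}$ using the decomposition properties and the hypotheses relating $\alpha,\beta$ to $\widetilde{\alpha},\widetilde{\beta}$. The one step you defer --- that $\beta$ on the \emph{singly} augmented product of triangles $[x,\hat{1}]$ costs exactly one factor $\epsilon_\beta$ rather than one per triangle factor --- is precisely the point the paper's proof also passes over without further argument; it requires reading the hypothesis $\beta(\lambda(c_{t_j}))=\epsilon_\beta\widetilde{\beta}(c_{t_j})$ as holding for classes of products of triangles as well (which, in the intended application, follows from the multiplicativity of the sum function $s$ via Lemma \ref{lem4. 4}).
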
 

\begin{proof}
The isomorphism class $\lambda(c_{d_i})$ is the isomorphism class of the diamond poset $d_i$ in $\mathcal{H}_{\diamond, \triangledown}$ by definition of $\lambda$. Moreover, the coproduct of $\lambda(c_{d_i})$ in $\mathcal{H}_{\diamond, \triangledown}$ and of $c_{d_i}$ in $\mathcal{B}_{\diamond, \triangledown}$ are the same by definition of the coproduct. As $\alpha$, $\widetilde{\alpha}$ on the one hand and $\beta$ and $\widetilde{\beta}$ on the other hand are equal on $\lambda(c_{d_i})$ and $c_{d_i}$ respectively, the first equality follows. 

To obtain the second equality, let us remark that the isomorphism class $\lambda(c_{t_j})$ corresponds to the isomorphism class of the augmented triangle poset $\widehat{t}_j$ by definition of $\lambda$. Hence the coproduct of $\lambda(c_{t_j})$ in $\mathcal{H}_{\diamond, \triangledown}$ has one more term than the coproduct of $c_{t_j}$ on $\mathcal{B}_{\diamond, \triangledown}$, due to the fact that the poset $\widehat{t_j}$ has one more element than the poset $t_j$. This term is $ \lambda(c_{t_j}) \otimes 1$. All the other terms can be matched by associating $\lambda(c_{t_j})$ to the unique isomorphism class of triangle poset of its fibre $c_{t_j}$. Moreover, the posets on the left part of the coproduct of $\lambda(c_{t_j})$ are of diamond type, except for the term that does not belong to the coproduct of $c_{t_j}$ and $\alpha$ and $\widetilde{\alpha}$ coincides on diamond posets. Therefore we have: 
\begin{equation*}
\alpha \ast \beta (\lambda(c_{t_j}))= \sum \widetilde{\alpha}(c_{t_j}^{(1)}) \epsilon_\beta \widetilde{\beta}(c_{t_j}^{(2)}) + \epsilon_\alpha \widetilde{\alpha}(c_{t_j}). 
\end{equation*}
This gives the result. 
\end{proof}

Remark that as the assumptions on $\alpha$ and $\beta$ are the same, the previous theorem also gives the formula for $\beta \ast \alpha$. 

We will use these results in Section \ref{compmoeb} to compute some characters on the hypertree posets.

	\section{Incidence Hopf algebra of hypertree posets and partition posets}

From now on, we choose $K=\mathbb{Q}$.
 
		\subsection{Incidence Hopf algebra of the hypertree posets}
			
		A \emph{hypergraph} is a pair $(V,E)$, where the elements of $V$ are called \emph{vertices} and the elements of $E$, called \emph{edges}, are sets of at least two vertices. The \emph{size} of an edge $e$ is the number of vertices in the edge $e$. The \emph{valency} of a vertex $v$ is the number of edges to which $v$ belongs. A \emph{walk} on a hypergraph $H=(V,E)$ from a vertex $s$ of $H$ to a vertex $f$ of $H$ is an alternating sequence of vertices and edges in $H$ $(s=v_0, e_0, v_1, e_1, \dots, e_{n-1}, v_n=f)$ such that $e_i$ are edges containing the vertices $v_i$ and $v_{i+1}$, for all $i \in \llbracket 0,n-1 \rrbracket $. A \emph{hypertree} is a hypergraph such that given any pair $(s,f)$ of vertices, there exists one and only one walk from $s$ to $f$ without repeated edges. We say that a hypertree is on $n$ vertices if the set $V$ is of cardinality $n$. 
		
		We can define the following order on hypertrees: a hypertree $T$ is smaller than a hypertree $T'$ whenever the edges of $T$ are unions of some edges of $T'$. The set of hypertrees on $n$ vertices endowed with this partial order is a poset denoted by $\htn$. This poset has a least element $\hat{0}$ which is the hypertree with only one edge. The poset obtained by adding to this poset a greatest element $\hat{1}$ is called the $(n-)$ augmented hypertree poset and denoted by $\HTn$. 
		
\begin{figure}[h!]
\begin{tikzpicture}[scale=0. 80]
\draw[black, fill=gray!40] (0,0) -- (1,0) -- (1,1) -- (0,0);
\draw (-2,3) -- (-3,3) -- (-2,4);
\draw (0,3) -- (1,4) -- (1,3);
\draw (3,3) -- (4,3) -- (4,4);
\draw[thick] (-2. 5,2. 8) -- (0. 5,1. 5);
\draw[thick] (0. 5,2. 8) -- (0. 5,1. 5);
\draw[thick] (3. 5,2. 8) -- (0. 5,1. 5);
\draw[thick] (-2. 5,4. 2) -- (0. 5,5. 5);
\draw[thick] (0. 5,4. 2) -- (0. 5,5. 5);
\draw[thick] (3. 5,4. 2) -- (0. 5,5. 5);
\draw[black, fill=white] (0,0) circle (0. 2);
\draw[black, fill=white] (1,0) circle (0. 2);
\draw[black, fill=white] (1,1) circle (0. 2);
\draw[black, fill=white] (-2,3) circle (0. 2);
\draw[black, fill=white] (-3,3) circle (0. 2);
\draw[black, fill=white] (-2,4) circle (0. 2);
\draw[black, fill=white] (0,3) circle (0. 2);
\draw[black, fill=white] (1,4) circle (0. 2);
\draw[black, fill=white] (1,3) circle (0. 2);
\draw[black, fill=white] (3,3) circle (0. 2);
\draw[black, fill=white] (4,3) circle (0. 2);
\draw[black, fill=white] (4,4) circle (0. 2);
\draw(0. 5,6) node{$\hat{1}$};
\draw(0,0) node{$1$};
\draw(1,1) node{$2$};
\draw(1,0) node{$3$};
\draw(-3,3) node{$1$};
\draw(-2,4) node{$2$};
\draw(-2,3) node{$3$};
\draw(0,3) node{$1$};
\draw(1,4) node{$2$};
\draw(1,3) node{$3$};
\draw(3,3) node{$1$};
\draw(4,4) node{$2$};
\draw(4,3) node{$3$};
\end{tikzpicture}
\caption{The poset $\widehat{HT_3}$}\label{HT3}
\end{figure}
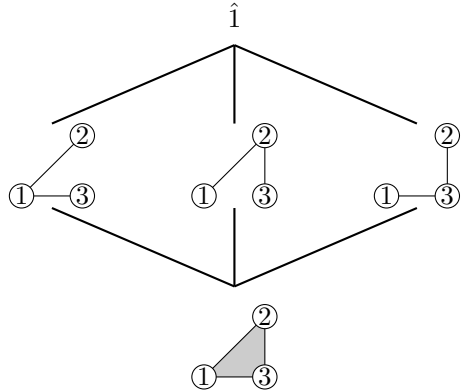		
		
		Some intervals in the hypertree posets will be described in terms of another type of posets: the partition posets. A \emph{partition poset} is a poset on the set of all the partitions of a set $V$. A partition $p_1$ is smaller than another one $p_2$ if each part of $p_1$ is the union of some parts of $p_2$. The partition poset on $n$ vertices $\Pi_n$ is based on the set of partitions of a set of cardinality $n$. 

\begin{center}
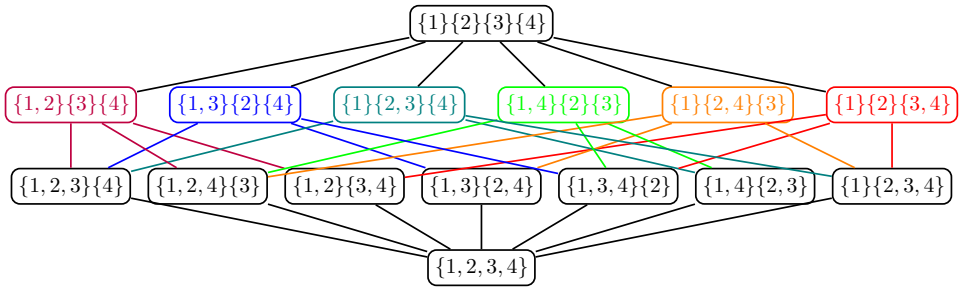
\begin{figure}[h!]
\scalebox{0.8}
{\begin{tikzpicture}[scale=0.9]
\node[draw, thick, rounded corners] (max) at (0,4. 5) {$\{1\}\{2\}\{3\}\{4\}$};
\node[draw, thick, rounded corners, purple] (12) at (-7.5,3) {$\{1, 2\}\{3\}\{4\}$};
\node[draw, thick, rounded corners, blue] (13) at (-4.5,3) {$\{1, 3\}\{2\}\{4\}$};
\node[draw, thick, rounded corners, teal] (23) at (-1. 5,3) {$\{1\}\{2, 3\}\{4\}$};
\node[draw, thick, rounded corners, green] (14) at (1.5,3) {$\{1, 4\}\{2\}\{3\}$};
\node[draw, thick, rounded corners, orange] (24) at (4.5,3) {$\{1\}\{2,4\}\{3\}$};
\node[draw, thick, rounded corners, red] (34) at (7.5,3) {$\{1\}\{2\}\{3, 4\}$};
\draw[thick] (max) -- (12);
\draw[thick] (max) -- (13);
\draw[thick] (max) -- (14);
\draw[thick] (max) -- (23);
\draw[thick] (max) -- (24);
\draw[thick] (max) -- (34);
\node[draw, thick, rounded corners] (123) at (-7.5,1. 5) {$\{1, 2, 3\}\{4\}$};
\node[draw, thick, rounded corners] (124) at (-5,1. 5) {$\{1, 2, 4\}\{3\}$};
\node[draw, thick, rounded corners] (12d) at (-2.5,1. 5) {$\{1, 2\}\{3, 4\}$};
\node[draw, thick, rounded corners] (13d) at (0,1. 5) {$\{1, 3\}\{2, 4\}$};
\node[draw, thick, rounded corners] (134) at (2.5,1. 5) {$\{1, 3, 4\}\{2\}$};
\node[draw, thick, rounded corners] (14d) at (5,1. 5) {$\{1, 4\}\{2, 3\}$};
\node[draw, thick, rounded corners] (234) at (7.5,1. 5) {$\{1\}\{2, 3, 4\}$};
\draw[thick, purple] (12) -- (12d);
\draw[thick, red] (34) -- (12d);
\draw[thick, blue] (13) -- (13d);
\draw[thick, orange] (24) -- (13d);
\draw[thick, green] (14) -- (14d);
\draw[thick, teal] (23) -- (14d);
\draw[thick, purple] (12) -- (123);
\draw[thick, blue] (13) -- (123);
\draw[thick, teal] (23) -- (123);
\draw[thick, purple] (12) -- (124);
\draw[thick, green] (14) -- (124);
\draw[thick, orange] (24) -- (124);
\draw[thick, blue] (13) -- (134);
\draw[thick, green] (14) -- (134);
\draw[thick, red] (34) -- (134);
\draw[thick, teal] (23) -- (234);
\draw[thick, orange] (24) -- (234);
\draw[thick, red] (34) -- (234);
\node[draw, thick, rounded corners] (min) at (0,0) {$\{1, 2, 3, 4\}$};
\draw[thick] (min) -- (12d);
\draw[thick] (min) -- (13d);
\draw[thick] (min) -- (14d);
\draw[thick] (min) -- (123);
\draw[thick] (min) -- (124);
\draw[thick] (min) -- (134);
\draw[thick] (min) -- (234);
\end{tikzpicture}}
\caption{The poset $\Pi_4$}
\end{figure}	
\end{center}
		
		We need the following result of J. McCammond and J. Meier on intervals in the hypertree poset: 
		
		\begin{lem}[Lemma 2.5,\cite{McCM}]\label{LMcCM} Let $\tau$ be a hypertree on $n$ vertices. 
		
\begin{enumerate}[(a)]
\item The interval $[\hat{0},\tau]$ is a direct product of partition posets, with one factor $\Pi_j$ for each vertex in $\tau$ with valency $j$. 
\item The half-open interval $[\tau,\hat{1})$ is a direct product of hypertree posets, with one factor $\operatorname{HT_j}$ for each edge in $\tau$ with size $j$. 
\end{enumerate}
	
		\end{lem}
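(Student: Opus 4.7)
The plan is to establish both parts by exhibiting explicit poset isomorphisms, relying throughout on the defining structural property of a hypertree: any two distinct edges share at most one vertex (otherwise the common vertices would admit more than one walk between them).

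For part (a), I would encode a hypertree $T \in [\hat{0},\tau]$ by the equivalence relation $\sim_T$ on the edge set $E(\tau)$ given by $e \sim_T e'$ iff $e$ and $e'$ are contained in a common edge of $T$; the edges of $T$ are then exactly the unions of $\sim_T$-classes. Restricting $\sim_T$ to the edges through each vertex $v$ of $\tau$ yields a partition of a $\mathrm{val}(v)$-element set, hence an element of $\Pi_{\mathrm{val}(v)}$. I would show that the assembly map $T \mapsto (\sim_T|_v)_v$ is a poset isomorphism $[\hat{0},\tau] \to \prod_{v \in V(\tau)} \Pi_{\mathrm{val}(v)}$. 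Since edges of a hypertree share at most one vertex, $\sim_T$ is determined by its local restrictions, which gives injectivity; for surjectivity, given $(P_v)_v$, I would define $\sim$ on $E(\tau)$ as the transitive closure of the local relations and form the candidate hypergraph $T$ whose edges are the unions of $\sim$-classes. Verifying that $T$ is a hypertree amounts to lifting any walk in $T$ to a walk in $\tau$ (using that each $\sim$-class is a connected subhypertree of $\tau$) and invoking uniqueness of walks in $\tau$. Order preservation on both sides coincides with refinement of partitions, so that step is direct.

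For part (b), any $T \in [\tau,\hat{1})$ refines $\tau$: each edge $e$ of $\tau$ is a union of $T$-edges, and as distinct $\tau$-edges share at most one vertex, every $T$-edge is contained in a unique $\tau$-edge. I would send $T$ to the tuple $(T|_e)_{e \in E(\tau)}$ where $T|_e$ is the hypergraph on the $|e|$ vertices of $e$ whose edges are the $T$-edges contained in $e$; it is a hypertree because any walk inside $T|_e$ is a walk in $T$. The inverse $(\tau_e)_e \mapsto \bigcup_e \tau_e$ produces a hypertree by the following decomposition: any walk in $\bigcup_e \tau_e$ breaks into successive segments, each lying entirely inside a single $\tau_e$ and separated by the unique shared vertices of consecutive $\tau$-edges; uniqueness then follows by combining uniqueness of walks in $\tau$ with uniqueness in each $\tau_e$. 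Order preservation is immediate from the componentwise nature of the construction.

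I expect the main technical obstacle in both parts to be these walk-lifting arguments: one must rule out extraneous walks in the reconstructed hypertree by carefully exploiting the tree-like structure of $\tau$ to convert any hypothetical pair of distinct walks in the candidate $T$ (or in $\bigcup_e \tau_e$) into a pair of distinct walks in $\tau$ or in one of the factors, thereby contradicting uniqueness there.
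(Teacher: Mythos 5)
The paper does not prove this lemma: it is imported verbatim as Lemma~2.5 of McCammond and Meier \cite{McCM}, so there is no internal proof to compare yours against. Your outline is, as far as I can tell, the natural argument (and essentially the one in the cited source): encode a hypertree below $\tau$ by the induced partition of the edges incident to each vertex, and a hypertree above $\tau$ by its restrictions to the edges of $\tau$, using in both directions the fact that two distinct edges of a hypertree meet in at most one vertex. The order-theoretic bookkeeping is right for this paper's conventions ($\hat 0$ is the one-edge hypertree, the one-block partition is the minimum of $\Pi_n$), and the identification of the order relation as componentwise refinement is correct.

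Two of the steps you defer deserve to be singled out, because they carry the actual content and your sketch slightly understates one of them. In (a), injectivity requires not merely that $\sim_T$ determines $T$, but that $\sim_T$ equals the transitive closure of its restrictions to the stars of the vertices; this needs the fact that the set of $\tau$-edges contained in a single $T$-edge is connected, which is itself a walk-projection argument of the same kind you invoke for surjectivity. In (b), your justification that $T|_e$ is a hypertree (``any walk inside $T|_e$ is a walk in $T$'') only gives \emph{uniqueness} of walks; you must still prove \emph{existence}, i.e.\ that the unique $T$-walk between two vertices of $e$ uses only $T$-edges contained in $e$ (equivalently, that $T|_e$ is connected). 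Both points follow by projecting walks along the map sending each $T$-edge to the unique $\tau$-edge containing it, reducing, and invoking uniqueness of walks in $\tau$, so the plan goes through; but they are genuine steps rather than formalities, and a complete write-up should include them.
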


Let us consider the incidence Hopf algebra $\HHTn = (\HHTn,\times, \epsilon, \eta, \Delta, S)$ obtained from the construction of \S \ref{DP} by taking the set of partition posets $(p_i)_{i \geq 1}$ for the set of diamond posets and the set of hypertree posets $(h_n)_{n \geq 3}$, where $h_n$ is the shorter notation for $HT_n$, for the set of triangle posets. Indeed, $p_1$, the partition poset on one element and the hypertree poset on two elements $HT_2$ are isomorphic to the trivial poset, partition posets are intervals and hypertree posets have a least element but no greatest one. Moreover, it is a classical result that every interval in a partition poset is isomorphic to a product of partition posets. This fact combined with Lemma \ref{LMcCM} implies that this family satisfies the decomposition property and then all the requirements of Section \ref{DP}. We will also denote by $\widehat{h_n}$ the augmented hypertree poset $\widehat{HT_n}$.

We consider $\HHTn^*$, the group of characters $\chi: \HHTn \rightarrow \mathbb{Q}$. We aim at calculating the Moebius numbers for the augmented hypertree posets using the classical techniques of characters. A good reference for such a computation of characters, and Moebius numbers, for the partition posets is the article \cite{Speicher1997} of R. Speicher. To compute the character which associates to any poset of $\HHTn$ its Moebius number, we use Theorem \ref{thmconv}. 

We call $\Spnr$ the bialgebra defined in Section \ref{bialg}. Thanks to Lemma \ref{LMcCM}(b), we obtain that this bialgebra is not only generated as an algebra by isomorphism classes of partition posets and isomorphism classes of  intervals $[\tau,\hat{1})$, for any hypertree $\tau$, but also by a smaller set: the isomorphism classes of partition posets $p_n$ and the isomorphism classes of hypertree posets $h_n$. Moreover, partition posets and hypertree posets are both graded, therefore two partition posets or hypertree posets respectively on $n$ and $m$ are isomorphic if and only if $m$ and $n$ are equal. As every $p_i$ and $h_j$ are pairwise in different isomorphism classes, due to gradings, and as we focus on these classes, we will use the same notation for the isomorphism classes of posets and posets themselves.

Hence, the convolution of characters $\alpha$ and $\beta$ on $\HHTn$ can be computed using the bialgebra $\Spnr$: 

\begin{prop} \label{propdec} The convolution of characters $\alpha$ and $\beta$ on $\HHTn$ can be computed using maps $\widetilde{\alpha}$ and $\widetilde{\beta}$ from $\Spnr$ to $\mathbb{Q}$, provided they exist and satisfy the following equations, for all $i \geq 1$ and $n \geq 3$: 
\begin{equation*}
\alpha (p_i) = \widetilde{\alpha} (p_i), \text{ } \alpha (\widehat{h}_n) = \epsilon_\alpha \widetilde{\alpha} (h_n),
\end{equation*}
\begin{equation*}
\beta (p_i) = \widetilde{\beta} (p_i), \text{ } \beta (\widehat{h}_n) = \epsilon_\beta \widetilde{\beta} (h_n),
\end{equation*}
with $\epsilon_\alpha, \epsilon_\beta \in \mathbb{Q}$. 

This computation is given by: 
\begin{equation*}
\alpha \ast \beta (p_i) = \sum \widetilde{\alpha}(p_i^{(1)}) \widetilde{\beta}(p_i^{(2)}),
\end{equation*}
and 
\begin{equation*}
\alpha \ast \beta (\widehat{h_j}) = \epsilon_\beta \sum \widetilde{\alpha}(h_j^{(1)}) \widetilde{\beta}(h_j^{(2)}) + \epsilon_\alpha \widetilde{\alpha}(h_j),
\end{equation*}
where $\Delta(p_i)=\sum p_i^{(1)} \otimes p_i^{(2)}$ and $\Delta(h_j)=\sum h_j^{(1)} \otimes h_j^{(2)}$ in $\Spnr$. 
\end{prop}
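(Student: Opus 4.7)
The plan is to recognize the statement as a direct specialization of Theorem \ref{thmconv} to the hypertree/partition setting. The only real work is to check that the pair $(p_i, h_n)$ honestly fits the triangle and diamond framework of Section \ref{DP}, and then to translate the hypotheses through the map $\lambda$.

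First I would verify the three decomposition properties from Section \ref{DP}. Partition posets are bounded intervals, hence candidates for diamond posets, and it is classical that every closed interval of a partition poset is again isomorphic to a product of partition posets, which gives Decomposition Property 1. The hypertree posets $h_n$ possess a least element $\hat{0}$ (the one-edge hypertree) but no greatest element, so they are candidates for triangle posets; the augmented poset $\widehat{h_n}$ is then bounded. Decomposition Properties 2a and 2b are then exactly the two parts of Lemma \ref{LMcCM}: part (a) realizes the closed interval $[\hat{0},\tau]$ as a product of partition posets, i.e.\ of diamond posets, while part (b) realizes the half-open interval $[\tau,\hat{1})$ of $\widehat{h_n}$ as a product of hypertree posets, i.e.\ of triangle posets. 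Thus the hypotheses of Section \ref{DP} hold.

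Next I would exploit the grading to identify isomorphism classes with posets. Since partition posets and hypertree posets are graded by the cardinality $n$ of the underlying set, two partition posets (respectively two hypertree posets) are isomorphic if and only if they have the same number of vertices. In particular the generators $p_i$ and $h_n$ of $\Spnr$ coincide with their own isomorphism classes $c_{p_i}$ and $c_{h_n}$, and the linear map $\lambda$ of Section 2.3 reads $\lambda(p_i)=p_i$ and $\lambda(h_n)=\widehat{h_n}$ in $\HHTn$. Under these identifications, the hypotheses $\alpha(p_i)=\widetilde\alpha(p_i)$, $\alpha(\widehat{h_n})=\epsilon_\alpha\widetilde\alpha(h_n)$ (and similarly for $\beta$) of the proposition are precisely the hypotheses of Theorem \ref{thmconv} rewritten in the present notation.

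Finally I would simply invoke Theorem \ref{thmconv}. Its two convolution formulas, specialized via $c_{d_i}\mapsto p_i$ and $c_{t_j}\mapsto h_j$, become word-for-word the two formulas asserted in Proposition \ref{propdec}. No further computation is needed: the whole point of having proved Theorem \ref{thmconv} in general is to make this specialization essentially tautological, so there is no genuine obstacle — the only thing to be careful about is to ensure that the identification between isomorphism classes in $\Spnr$ and the symbols $p_i, h_n$ is done cleanly, which is why the graded-uniqueness remark above is essential.
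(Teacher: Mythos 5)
Your proposal is correct and matches the paper's approach: the paper's own proof is the one-line remark that the proposition is a corollary of Theorem \ref{thmconv} applied to $p_i$ and $h_j$, with the verification of the decomposition properties (via Lemma \ref{LMcCM}) and the graded-uniqueness identification of posets with their isomorphism classes carried out in the surrounding text rather than inside the proof environment. You have simply folded those preliminary checks into the proof itself, which is the same argument.
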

	
	\begin{proof}
This is a corollary of Theorem \ref{thmconv} for $p_i$ and $h_j$. 
	\end{proof}
	
\begin{exple} We consider the poset $\widehat{HT_3}$ drawn on figure \ref{HT3}. 

The computation of the coproducts gives:
\begin{align*}
\Delta (\widehat{h_3}) = 1 \otimes \widehat{h_3} + 3 \  p_2 \otimes \widehat{h_2} + \widehat{h_3} \otimes 1 \text{, in $\HHTn$,}\\
\Delta (h_3)= 1 \otimes h_3 + 3 \ p_2 \otimes h_2 \text{, in $\Spnr$} .
\end{align*}
\end{exple}

We determine a closed expression for the coproduct in the next section. 

		\subsection{Computation of the coproduct}

	We now compute the coproduct $\Dr$ in the algebra $\Spnr$. We denote by $1$ the neutral element of $\Spnr$ for the product, i.e. the trivial poset. 
	
	The coproduct of isomorphism classes of partition posets $p_n$ has already been computed. It can be found for instance in the article of W. Schmitt \cite{IHA}:

	\begin{prop}[Example 14.1 in \cite{IHA}]:
	The coproduct on the isomorphism classes of partition posets is given by: 
		\begin{equation*}
	\Dr \left( \frac{p_n}{n!} \right) = \sum_{k=1}^{n} \sum_{\substack{(j_1, \ldots, j_n) \in \mathbb{N}\\ \sum_{i=1}^n j_i=k, \sum_{i=1}^n i j_i=n}} \binom{k}{j_1, \dots, j_n} \prod_{i=1}^n \left(\frac{p_i}{i!}\right)^{j_i}  \otimes \frac{p_k}{k!},
	\end{equation*}
	\end{prop}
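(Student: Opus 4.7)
The plan is to apply the defining formula for the coproduct in an incidence Hopf algebra directly to $p_n$, group the terms by the block-type of the chosen partition, and then recognize the resulting combinatorial coefficient as a multinomial coefficient after normalizing by $n!$ and $k!$.

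First I would start from
\begin{equation*}
\Delta(p_n) = \sum_{x \in \Pi_n} [\hat{0}, x] \otimes [x, \hat{1}],
\end{equation*}
and sort the sum on the right by the \emph{type} of $x$: namely, say that $x$ has type $(j_1,\dots,j_n)$ if it has exactly $j_i$ blocks of cardinality $i$. Such a type can occur iff $\sum_i i\, j_i = n$, in which case $x$ has $k := \sum_i j_i$ blocks.

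Second, I would use the two classical structural identifications inside the partition lattice. If $x$ has type $(j_1,\dots,j_n)$, then a refinement of $x$ is the data of an independent refinement of each block, so
\begin{equation*}
[\hat{0}, x] \;\cong\; \prod_{i=1}^n \Pi_i^{j_i},
\end{equation*}
while a coarsening of $x$ is a partition of its $k$ blocks, giving
\begin{equation*}
[x, \hat{1}] \;\cong\; \Pi_k.
\end{equation*}
Both isomorphisms are purely set-theoretic and are standard; I would quote them without reproving them. Passing to isomorphism classes in $\CP$, the interval $[\hat{0},x]\otimes[x,\hat{1}]$ contributes $\prod_i p_i^{j_i}\otimes p_k$.

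Third, I would count how many $x\in\Pi_n$ are of a given type. This is the classical formula
\begin{equation*}
\#\{x \in \Pi_n : \text{type}(x) = (j_1,\dots,j_n)\} = \frac{n!}{\prod_{i=1}^n (i!)^{j_i}\, j_i!},
\end{equation*}
obtained by distributing the $n$ labels among the blocks and dividing by the internal permutations inside each block and by the permutations of blocks of equal size. Substituting this count yields
\begin{equation*}
\Delta(p_n) \;=\; \sum_{k=1}^n \sum_{\substack{(j_1,\dots,j_n)\\ \sum j_i = k,\ \sum i j_i = n}} \frac{n!}{\prod_i (i!)^{j_i}\, j_i!}\, \prod_{i=1}^n p_i^{j_i} \;\otimes\; p_k.
\end{equation*}

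Finally, I would divide both sides by $n!$ and insert factors of $i!$ and $k!$ to rewrite the coefficient as a multinomial. The algebraic identity
\begin{equation*}
\frac{1}{\prod_i (i!)^{j_i}\, j_i!} \;=\; \frac{1}{k!}\binom{k}{j_1,\dots,j_n}\prod_{i=1}^n \frac{1}{(i!)^{j_i}}
\end{equation*}
transforms the sum into exactly the stated form. I do not expect a genuine obstacle here: the entire proof is a bookkeeping exercise once the two interval-isomorphisms are in hand, and the only thing to check carefully is that the indices $k$ and $(j_i)$ range exactly over the set declared in the statement, which follows from $\sum_i i j_i = |x| = n$ and $k = \sum_i j_i = \#\text{blocks of }x$.
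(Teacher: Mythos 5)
Your argument is correct and complete: the paper itself gives no proof of this proposition (it is quoted directly from Schmitt's Example 14.1), and your derivation --- grouping the elements $x$ of $\Pi_n$ by block type, invoking the standard interval isomorphisms $[\hat0,x]\cong\prod_i\Pi_i^{j_i}$ and $[x,\hat1]\cong\Pi_k$, counting partitions of a given type by $n!/\prod_i (i!)^{j_i} j_i!$, and normalizing --- is exactly the standard argument behind Schmitt's formula, and the final coefficient manipulation checks out. One point worth flagging: your interval identifications assume the discrete partition (all singletons) is $\hat0$, which is Schmitt's convention and is the one consistent with the stated formula; the present paper, however, draws $\Pi_4$ with the one-block partition at the bottom, under which convention the two tensor factors would be swapped. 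This is a latent inconsistency in the paper's conventions rather than an error in your proof, but you should state explicitly which order on $\Pi_n$ you are using.
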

	where $p_1$ is the trivial poset. 

	Let us now compute the coproduct for $\hn$. According to the structure of the hypertree posets and Lemma \ref{LMcCM}, the left part of the coproduct of isomorphism classes of the hypertree poset $h_n$ is a product of isomorphism classes of partition posets and the right part is a product of isomorphism classes of hypertree posets $h_k$. We first establish a criterion to describe the tensor products appearing in the coproduct of $\hn$. We write the coproduct as follows:
	\begin{equation}\label{defCoprod}
	\Dr (\hn)=\sum_{(\alpha, \pi) \in \mathcal{P}_n} \cnap p_\alpha \otimes h_\pi,
	\end{equation}
where $\mathcal{P}_n$ is the set of pairs $(\alpha, \pi)$ such that $\cnap$ does not vanish, and for all $\alpha=(\alpha_1,\alpha_2, \dots, \alpha_k)$ and $\pi=(\pi_2, \pi_3, \dots, \pi_l)$, $p_\alpha=1^{\alpha_1} p_2^{\alpha_2} \dots p_k^{\alpha_k}$ and $h_\pi=h_2^{\pi_2}h_3^{\pi_3}\dots h_l^{\pi_l}$. The coefficient $\cnap$ corresponds to the number of hypertrees in $\htn$ with $\alpha_i$ vertices of valency $i$ and $\pi_j$ edges of size $j$, for all $i \geq 1$ and $j \geq 2$. 

We now characterize the set $\mathcal{P}_n$. We consider hypertrees as $\intunn$-labelled bipartite trees as in \cite{McCuMi}. A $\intunn$-labelled bipartite tree is a tree $T$ together with a bijection from $\intunn$ to a subset of its vertex set such that the image of $\intunn$ includes all of the vertices of valency $1$ and for every edge in $T$ exactly one of its endpoints lies in the image of $\intunn$. The labelled vertices of a bipartite tree correspond to the vertices of the associated hypertree and the other vertices correspond to the edges of the hypertree. We denote by $\alpha_i$ the number of labelled vertices of valency $i$ and by $\pi_j$ the number of unlabelled vertices of valency $j$ (or of edges of size $j$ in the hypertree). We want to determine necessary and sufficient conditions on $(\alpha_1, \dots, \alpha_l)$ and $(\pi_2, \dots, \pi_k)$ for the existence of a hypertree with $\alpha_i$ vertices of valency $i$ and $\pi_j$ edges of size $j$, for all $i \geq 1$ and $j \geq 2$. 

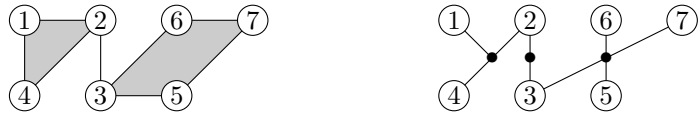
\begin{figure}[h!]
\begin{tikzpicture}
\draw[black, fill=gray!40] (0,0) -- (1,1) -- (0,1) -- (0,0);
\draw[black, fill=gray!40] (1,0) -- (2,0) -- (3,1) -- (2,1) -- (1,0);
\draw(1,0) -- (1,1);
\draw[black, fill=white] (0,0) circle (0.2);
\draw[black, fill=white] (0,1) circle (0.2);
\draw[black, fill=white] (1,1) circle (0.2);
\draw[black, fill=white] (1,0) circle (0.2);
\draw[black, fill=white] (2,0) circle (0.2);
\draw[black, fill=white] (3,1) circle (0.2);
\draw[black, fill=white] (2,1) circle (0.2);
\draw(0,0) node{$4$};
\draw(0,1) node{$1$};
\draw(1,1) node{$2$};
\draw(1,0) node{$3$};
\draw(2,0) node{$5$};
\draw(2,1) node{$6$};
\draw(3,1) node{$7$};
\end{tikzpicture} \hspace{2cm}
\begin{tikzpicture}
\draw(0.5,0.5) -- (1,1);
\draw(0.5,0.5) -- (0,1);
\draw(0.5,0.5) -- (0,0);
\draw(1,1) -- (1,0);
\draw(2,1) -- (2,0);
\draw(1,0) -- (3,1);
\draw[black, fill=white] (0,0) circle (0.2);
\draw[black, fill=white] (0,1) circle (0.2);
\draw[black, fill=white] (1,1) circle (0.2);
\draw[black, fill=white] (1,0) circle (0.2);
\draw[black, fill=white] (2,0) circle (0.2);
\draw[black, fill=white] (3,1) circle (0.2);
\draw[black, fill=white] (2,1) circle (0.2);
\draw(0.5,0.5) node{$\bullet$};
\draw(1,0.5) node{$\bullet$};
\draw(2,0.5) node{$\bullet$};
\draw(0,0) node{$4$};
\draw(0,1) node{$1$};
\draw(1,1) node{$2$};
\draw(1,0) node{$3$};
\draw(2,0) node{$5$};
\draw(2,1) node{$6$};
\draw(3,1) node{$7$};
\end{tikzpicture}
\caption{A hypertree and its associated labelled bipartite tree.}
\end{figure}

We hence obtain the following criterion for the non-vanishing of $\cnap$, expressed in terms of hypertrees: 

\begin{prop} Given two tuples $\alpha=(\alpha_1, \ldots, \alpha_k)$ and $\pi=(\pi_2, \ldots, \pi_l)$, there exists a hypertree with $\alpha_i$ vertices of valency $i$ and $\pi_j$ edges of size $j$ if and only if:
 \begin{equation} \label{theeq}
\sum_{i=1}^k \alpha_i = n, \quad \sum_{j=2}^{l} (j-1) \pi_j = n-1 \text{ and } \sum_{i=1}^k i \alpha_i = n + \sum_{j=2}^{l} \pi_j -1.
\end{equation}

\end{prop}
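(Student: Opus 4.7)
The plan is to prove the two implications separately, using throughout the encoding of a hypertree as an $\intunn$-labelled bipartite tree $T$ recalled just before the statement.

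For the necessity, I would read off the three equations directly from $T$. The first equation $\sum_i \alpha_i = n$ simply says that $T$ has $n$ labelled vertices. Since $T$ is a tree on $n + \sum_j \pi_j$ vertices, the handshake lemma gives a total valency of $2(n + \sum_j \pi_j - 1)$; because every edge of $T$ joins a labelled vertex to an unlabelled one, each of the two partial sums $\sum_i i\alpha_i$ and $\sum_j j\pi_j$ equals the number of edges of $T$, namely $n + \sum_j \pi_j - 1$. The second equation is then obtained by subtracting $\sum_j \pi_j$ from $\sum_j j\pi_j$, and the third equation is the identity just derived for $\sum_i i\alpha_i$.

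For the sufficiency, I would argue by induction on $n$. The base case $n=2$ is forced: the equations dictate $\alpha_1=2$ and $\pi_2=1$, realized by the unique hypertree on two vertices. For the inductive step, I would first check that the equations imply $\alpha_1\geq 1$: if $\alpha_1 = 0$, then $\sum_{i\geq 2}(i-1)\alpha_i \geq \sum_{i\geq 2}\alpha_i = n$, so $\sum_j \pi_j \geq n+1$ using the third equation, while the second equation combined with $j\geq 2$ gives $\sum_j \pi_j \leq \sum_j (j-1)\pi_j = n-1$, a contradiction when $n\geq 2$. I would then split on whether some $\pi_j$ with $j\geq 3$ is positive. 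If such a $j$ exists, pick a labelled leaf; the reduced tuple $\alpha'_1 = \alpha_1 - 1$, $\pi'_j = \pi_j - 1$, $\pi'_{j-1} = \pi_{j-1} + 1$ (all others unchanged) is easily verified to satisfy the three equations for $n-1$, so by induction a hypertree $H'$ realizes it; reinserting a new vertex of valency $1$ into the resulting size-$(j-1)$ edge of $H'$ yields the desired hypertree on $n$ vertices. If on the other hand every $\pi_j$ with $j\geq 3$ vanishes, then every hyperedge has size exactly $2$, so realizing the data is equivalent to producing a tree on $n$ vertices with vertex-degree sequence prescribed by $\alpha$; the equations collapse to $\sum_i \alpha_i = n$ and $\sum_i i\alpha_i = 2(n-1)$ with $\alpha_1\geq 1$, and the classical leaf-removal construction supplies such a tree.

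The main obstacle I expect is not the arithmetic but the bookkeeping at the boundary $j=3 \to j-1 = 2$ of the reduction in the first case, together with making sure that the availability of a valency-$1$ vertex to remove is justified purely from the three equations and without circular appeal to a not-yet-constructed hypertree; both reduce to the inequality chain sketched above for $\alpha_1\geq 1$.
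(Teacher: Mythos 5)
Your proof is correct. The necessity direction is essentially the paper's: the paper counts half-edges around labelled and unlabelled vertices and invokes the Euler characteristic of the bipartite tree, which is the same bookkeeping as your handshake-lemma computation of the edge count $n+\sum_j\pi_j-1$. For sufficiency, however, you take a genuinely different route. The paper glues the prescribed half-edges into an arbitrary $\intunn$-labelled bipartite graph (possible since $\sum_i i\alpha_i=\sum_j j\pi_j$), observes that the Euler characteristic forces (number of components)$\,-\,$(number of independent cycles)$\,=1$, and then repairs connectivity by repeatedly cutting an edge on a cycle and an edge in another component and cross-linking the four freed half-edges. You instead induct on $n$: after establishing $\alpha_1\geq 1$ from the equations alone (your inequality chain is sound, and it correctly pre-empts the circularity worry about "removing a leaf" before any hypertree exists), you either shrink one edge of size $j\geq 3$ together with a valency-$1$ vertex and reinsert afterwards, or, when all edges have size $2$, fall back on the classical realization of a tree degree sequence with $\sum d_i=2(n-1)$. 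Both arguments are complete; the paper's surgery is non-inductive and exhibits the construction in one global step, while yours is more elementary, reduces the general case to a standard fact about trees, and avoids having to check that the edge swap respects the bipartite labelling constraints, at the cost of the $\alpha_1\geq 1$ lemma and a case split.
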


We postpone the proof of this proposition to illustrate it through an example. 
\begin{exple}\label{criter}
For $n=4$, the second equation of \eqref{theeq} implies that $\sum_{j=2}^{l} \pi_j \leq 3$, i.e. $\sum_{i=1}^k i \alpha_i \leq 6 $. The possible $\alpha$ are: 
\begin{itemize}
\item $\alpha=(4)$, then we obtain the condition $\sum_{j=2}^{l} \pi_j=1$ so the only possible $\pi$ is $\pi=(0,0,1)$,
\item $\alpha=(3,1)$, then we obtain the condition $\sum_{j=2}^{l} \pi_j=2$ so the only possible $\pi$ is $\pi=(1,1)$,
\item $\alpha=(2,2)$, then we obtain the condition $\sum_{j=2}^{l} \pi_j=3$ so the only possible $\pi$ is $\pi=(3)$,
\item $\alpha=(3,0,1)$, then we obtain the condition $\sum_{j=2}^{l} \pi_j=3$ so the only possible $\pi$ is $\pi=(3)$. 
\end{itemize}
\end{exple}

\begin{proof}
	Suppose that there exists such a hypertree. Every vertex has a fixed valency. Therefore, counting vertices, we have the first equation: 
\begin{equation*}
\sum_{i=1}^k \alpha_i = n.
\end{equation*}
By construction of the labelled tree, every unlabelled vertex is linked with a labelled vertex. This leads to the following equality by counting edges around labelled and unlabelled vertices: 
\begin{equation}\label{eqalpi}
\sum_{i=1}^k i \alpha_i = \sum_{j=2}^l j \pi_j. 
\end{equation}
Moreover, to a bipartite tree can be associated a simplicial complex with faces of dimension at most $1$. This simplicial complex is connected without cycles, therefore its Euler characteristic is equal to $1$ and can be expressed as: 
\begin{equation} \label{Eulercar}
\chi=1=\sum_{j\geq 2} \pi_j - \sum_{j\geq 2} j \pi_j + \sum_{i\geq 1} \alpha_i . 
\end{equation}
These equations are equivalent to Equations \eqref{theeq}. 

We can also deduce from the second equation of the proposition the following expression of $\pi_2$ in terms of $\pi_j$ for $j \geq 3$: 
	\begin{equation*}
	\pi_2 = n -1 - \sum_{j \geq 3} (j-1) \pi_j. 
	\end{equation*}

\medskip	

Let us now prove that this condition is also sufficient. We consider a set of $\alpha_i$ labelled vertices with $i$ half-edges and $\pi_j$ unlabelled vertices with $j$ half-edges, with $i \geq 1$, $j \geq 2$, such that Equations \eqref{theeq} are satisfied. As Equation \eqref{eqalpi} is satisfied, we can then choose a way to associate the vertices to obtain a $\intunn$-labelled graph $T$, i.e. a graph together with a chosen bijection from $\intunn$ to a subset of its vertex set such that the image of $\intunn$ includes all of the vertices of valency $1$ and for every edge in $T$ exactly one of its endpoints lies in the image of $\intunn$.

As Equation \eqref{Eulercar} is satisfied, the Euler characteristic, i.e. the difference between the number of connected components and the number of cycles, is equal to $1$. If the graph is connected, then it has no cycles: it is a tree and we have constructed a $\intunn$-labelled tree. The associated hypertree has fixed valency and edge sizes sets.
 
If the graph is not connected, then there is a cycle in one of the connected components. Therefore, there is an edge in this connected component that can be removed without increasing the number of connected components. This edge is between an unlabelled vertex $u_1$ and a labelled vertex $l_1$. Let us cut an edge in one of the other connected components between two vertices $u_2$ and $l_2$. We then obtain a graph with each element of the set $\{u_1, l_1, u_2, l_2\}$ having an unlinked half-edge. Linking $u_2$ with $l_1$ and $u_1$ with $l_2$, we obtain a $\intunn$-labelled graph satisfying the conditions with one less connected component. Indeed, we may have disconnected the connected component of $u_2$ and $l_2$ by deleting the edge but when linking the vertices we create a path from $u_2$ to $l_2$ by using the one existing between $u_1$ and $l_1$. As this operation decreases the number of connected component, we can repeat it until we find a hypertree matching the required conditions. 

\end{proof}

\medskip

We then want to compute the coefficient $\cnap$ when it does not vanish. We do it using bijections. Given a tuple $\pi$, we call \emph{$\pi$-hooked partition} a partition with one block made of a vertex and with $\pi_j$ other blocks made of a hook and $j-1$ vertices, for all $j \geq 2$. 

\begin{exple}
A $\pi$-hooked partition $P$, for $\pi=(1,2)$:
\begin{center}
\begin{tikzpicture}
\draw[black, fill=white] (0,0) circle (0. 2);
\draw(0,0) node{$2$};
\end{tikzpicture}
\begin{tikzpicture}
\draw (0.5,0) ..controls (-0.75,0) .. (0.5,0.5);
\draw (0,0) ..controls (1.75,0) .. (0.5,0.5);
\draw (0.5,0.5) -- (0.5,0.6);
\draw (0.5,0.6) arc (-90:180:0.1cm);
\draw[black, fill=white] (0,0) circle (0. 2);
\draw[black, fill=white] (1,0) circle (0. 2);
\draw(0,0) node{$1$};
\draw(1,0) node{$5$};
\end{tikzpicture}
\begin{tikzpicture}
\draw (0.5,0) ..controls (-0.75,0) .. (0.5,0.5);
\draw (0,0) ..controls (1.75,0) .. (0.5,0.5);
\draw (0.5,0.5) -- (0.5,0.6);
\draw (0.5,0.6) arc (-90:180:0.1cm);
\draw[black, fill=white] (0,0) circle (0. 2);
\draw[black, fill=white] (1,0) circle (0. 2);
\draw(0,0) node{$4$};
\draw(1,0) node{$3$};
\end{tikzpicture}
\begin{tikzpicture}
\draw (0.5,0) ..controls (-0.75,0) .. (0.5,0.5);
\draw (0,0) ..controls (1.75,0) .. (0.5,0.5);
\draw (0.5,0.5) -- (0.5,0.6);
\draw (0.5,0.6) arc (-90:180:0.1cm);
\draw[black, fill=white] (0.5,0) circle (0. 2);
\draw(0.5,0) node{$6$};
\end{tikzpicture}.
\end{center}

Then the assembly of elements of a $\pi$-hooked partition into a hypertree can be seen as an assembly of coat-hangers and coat racks. We represent here the hypertree $T$ of example \ref{exple}:
\begin{center}
\begin{tikzpicture}
\draw[black, fill=white] (0.5,0) circle (0. 2);
\draw(0.5,0) node{$2$};
\draw (0.5,-0.8) ..controls (-0.75,-0.8) .. (0.5,-0.3);
\draw (0,-0.8) ..controls (1.75,-0.8) .. (0.5,-0.3);
\draw (0.5,-0.3) -- (0.5,-0.2);
\draw[black, fill=white] (0.5,-0.8) circle (0. 2);
\draw(0.5,-0.8) node{$6$};
\draw (0.5,-1.6) ..controls (-0.75,-1.6) .. (0.5,-1.1);
\draw (0,-1.6) ..controls (1.75,-1.6) .. (0.5,-1.1);
\draw (0.5,-1.1) -- (0.5,-1);
\draw[black, fill=white] (0,-1.6) circle (0. 2);
\draw[black, fill=white] (1,-1.6) circle (0. 2);
\draw(0,-1.6) node{$1$};
\draw(1,-1.6) node{$5$};
\draw (0,-2.4) ..controls (-1.25,-2.4) .. (0,-1.9);
\draw (-0.5,-2.4) ..controls (1.25,-2.4) .. (0,-1.9);
\draw (0,-1.9) -- (0,-1.8);
\draw[black, fill=white] (-0.5,-2.4) circle (0. 2);
\draw[black, fill=white] (0.5,-2.4) circle (0. 2);
\draw(-0.5,-2.4) node{$4$};
\draw(0.5,-2.4) node{$3$};
\end{tikzpicture}.
\end{center}

For convenience purposes, we will write $X$ for the hook and represent the $\pi$-hooked partition as: 
\begin{equation*}
P= (2)\quad (X|1 \ 5)\quad (X|4 \ 3)\quad (X|6).
\end{equation*}
\end{exple}

Rooting hypertrees in one vertex, i.e. choosing one vertex in each hypertree, gives the following equation, by replacing $\cnap$ by $\frac{\cnapp}{n}$ in Equation \eqref{defCoprod}:
\begin{equation*}
\Dr (\hn)=\frac{1}{n} \sum_{(\alpha, \pi) \in \mathcal{P}_n} \cnapp p_\alpha \otimes h_\pi, 
\end{equation*}
where $\cnapp$ corresponds to the number of rooted hypertrees in $\htn$ with $\alpha_i$ vertices of valency $i$ and $\pi_j$ edges of size $j$, for all $i \geq 1$ and $j \geq 2$. 

Let us fix $\pi$ and $\alpha$ and denote by $\Pi_{\operatorname{HP}}$ the set of $\pi$-hooked partitions and by $\mathcal{H}^p_{\alpha, \pi}$, the set of rooted hypertrees with $\alpha_i$ vertices of valency $i$ and $\pi_j$ edges of size $j$. The cardinality of $\mathcal{H}^p_{\alpha, \pi}$ is $\cnapp$. We consider the map $\varphi: \mathcal{H}^p_{\alpha, \pi} \rightarrow \Pi_{\operatorname{HP}}$ defined by taking for every edge $e$ the set of all vertices of $e$, except the closest to the root, and adding a hook to this set. If we add the singleton made of the root to this set of hooked sets, we obtain a $\pi$-hooked partition. Indeed, all sets but one of cardinality one have a hook and the size of each hooked set is one less than the size of the associated edge.

Given $P$ in $\Pi_{\operatorname{HP}}$, we call $F_P$ the fibre $\varphi^{-1}(P)$. The fibres of two distinct elements of $\Pi_{\operatorname{HP}}$ are necessarily disjoint as their images by $\varphi$ are different. Moreover, any element in $\mathcal{H}^p_{\alpha, \pi}$ has an image in $\Pi_{\operatorname{HP}}$ by $\varphi$. The coefficient $\cnapp$ is then the sum of the cardinalities of the disjoint fibres. As we will see in the proof, the cardinality of a fibre is independent from the considered $\pi$-hooked partitions: we denote it by $\dnap$. We will say that we can \emph{construct} a hypertree $H$ from a $\pi$-hooked partition $P$ if $\varphi(H)=P$.

Let us now link hypertrees to hooked partitions:
\begin{lem} \label{lem1}
The coefficient $\cnap$ is linked with $\dnap$ by: 
\begin{equation}
\cnap= \frac{1}{n} \times \frac{n!}{\prod_{j \geq 2} (j-1)!^{\pi_j} \pi_j!} \times \dnap. 
\end{equation}
\end{lem}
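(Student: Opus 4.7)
The plan is to decompose $\mathcal{H}^p_{\alpha, \pi}$ along the map $\varphi$ and to count $\Pi_{\operatorname{HP}}$ explicitly. A hypertree on $n$ vertices has exactly $n$ rootings, so by definition $\cnapp = n \cdot \cnap$, and it suffices to prove
\[
\cnapp = \frac{n!}{\prod_{j \geq 2} ((j-1)!)^{\pi_j} \pi_j!} \, \dnap.
\]

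First I would count $|\Pi_{\operatorname{HP}}|$. A $\pi$-hooked partition of $\intunn$ is specified by choosing the single unhooked vertex and by partitioning the remaining $n-1$ vertices into $\pi_j$ unordered hooked blocks of size $j-1$ for each $j \geq 2$. The multinomial arrangement of $n$ labels into blocks of the prescribed sizes, divided by the $(j-1)!^{\pi_j}$ orderings within each block of size $j-1$ and by the $\pi_j!$ permutations among blocks of equal size, yields
\[
|\Pi_{\operatorname{HP}}| = \frac{n!}{\prod_{j \geq 2} ((j-1)!)^{\pi_j} \pi_j!}.
\]

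Second, I would justify that the fibre cardinality $|F_P|$ does not depend on the chosen $P \in \Pi_{\operatorname{HP}}$. Any two $\pi$-hooked partitions $P$ and $P'$ share the same block-size profile (one singleton and $\pi_j$ hooked blocks of size $j-1$), so there is a permutation $\sigma$ of $\intunn$ that sends $P$ to $P'$. Relabelling the vertices of any rooted hypertree in $F_P$ by $\sigma$ preserves the valency profile $\alpha$ and edge size profile $\pi$, and it commutes with $\varphi$ by construction of the map; hence $\sigma$ restricts to a bijection $F_P \to F_{P'}$, and we may denote the common cardinality by $\dnap$.

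Finally, since the fibres $F_P$ partition $\mathcal{H}^p_{\alpha,\pi}$, summing over $P$ gives
\[
\cnapp = \sum_{P \in \Pi_{\operatorname{HP}}} |F_P| = |\Pi_{\operatorname{HP}}| \cdot \dnap = \frac{n!}{\prod_{j \geq 2} ((j-1)!)^{\pi_j} \pi_j!} \, \dnap,
\]
and dividing by $n$ yields the claimed formula. The only step requiring a short argument is the fibre-invariance, which is essentially automatic from the $\mathfrak S_n$-symmetry of the definitions of $\varphi$ and of the valency/edge size profile; everything else is a routine block-counting computation.
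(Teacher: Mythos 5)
Your proof is correct, and it takes a genuinely different (and shorter) route than the paper's. You treat $\varphi$ as an $\mathfrak{S}_n$-equivariant map onto the transitive $\mathfrak{S}_n$-set $\Pi_{\operatorname{HP}}$, deduce that all fibres share the common cardinality $\dnap$, count $|\Pi_{\operatorname{HP}}| = n!/\prod_{j \geq 2} (j-1)!^{\pi_j} \pi_j!$ by a direct multinomial argument, and conclude $\cnapp = |\Pi_{\operatorname{HP}}| \cdot \dnap$. The paper instead runs two orbit--stabilizer computations: one for the $\mathfrak{S}_n$-orbits $\mathcal{O}_j$ of rooted hypertrees, giving $\cnapp = n! \sum_j 1/|\operatorname{Aut}_{H_j}|$, and one for the action of the stabilizer $G_P$ of a hooked partition on the pieces $f^P_j$ of the fibre, giving $\dnap = |G_P| \sum_j 1/|\operatorname{Aut}_{H_j}|$ after identifying $\operatorname{Stab}_{G_P} H_j$ with $\operatorname{Aut}_{H_j}$; dividing these identities yields the lemma since $|G_P| = \prod_{j \geq 2} (j-1)!^{\pi_j} \pi_j!$. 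Both arguments rest on the same underlying symmetry (your $|\Pi_{\operatorname{HP}}| = n!/|G_P|$ is exactly orbit--stabilizer for the transitive action on hooked partitions), but you bypass the orbit decomposition of $\mathcal{H}^p_{\alpha,\pi}$ and the stabilizer-equals-automorphism-group verification entirely, which makes the proof less delicate. What the paper's route exhibits, and yours does not, is the intermediate quantity $\sum_j 1/|\operatorname{Aut}_{H_j}|$, a groupoid-style count of unlabelled rooted hypertrees; it is not needed for the lemma itself, so nothing is lost for the present purpose.
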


\begin{proof} We want to compute the cardinality $\cnapp$ of $\mathcal{H}^p_{\alpha, \pi}$. Let us consider the action of the symmetric group $\mathfrak{S}_n$ on $\mathcal{H}^p_{\alpha, \pi}$. By definition of the map $\varphi$, which does not depend on the labels of the vertices, this action induces an action of the symmetric group on the set $\Pi_{\operatorname{HP}}$.
The action of the symmetric group $\mathfrak{S}_n$ on the set of all hooked partitions of type $\pi$ is transitive, as it does not change the sizes of the blocks of the partitions. We call $(\mathcal{O}_j)_{1 \leq j \leq p}$ the orbits for the action of $\mathfrak{S}_n$ on the set $\mathcal{H}^p_{\alpha, \pi}$. The fibre $F_P$ has a component $f^P_j$ in every orbit $\mathcal{O}_j$. We recap all these notations on the following diagram:

\begin{center}
\begin{figure}[h!]
\begin{tikzpicture}
\draw (10,0) rectangle (9,3);
\draw[fill=gray!40] (10,1.2) rectangle (9,1.8);
\draw[->] (8,1.5) -- (8.75,1.5) node[midway, above]{$\varphi$}; 
\draw (7.75, 0) rectangle (10-3.25,3);
\draw[fill=gray!40] (10-2.25, 1.1) rectangle (10-3.25, 1.9);
\draw (10-3.5,0) rectangle (10-5,3);
\draw[fill=gray!40] (10-3.5,1.1) rectangle (10-5,1.9);
\draw (10-5.25,0) rectangle (10-6.5,3);
\draw[fill=gray!40] (10-5.25,1.1) rectangle (10-6.5,1.9);
\draw [decorate,decoration={brace,amplitude=5pt},xshift=0pt,yshift=-1pt]
(10-0,0) -- (10-1,0) node [black,below,midway,yshift=-0.3cm] 
{\footnotesize $\Pi_{\operatorname{HP}}$};
\draw [decorate,decoration={brace,amplitude=10pt},xshift=0pt,yshift=-1pt]
(10-2.25,0) -- (10-6.5,0) node [black,midway,below, yshift=-0.3cm] 
{\footnotesize $\mathcal{H}^p_{\alpha, \pi}$};
\draw [decorate,decoration={brace,mirror, amplitude=5pt},xshift=0pt, yshift=1pt]
(10-2.25,3) -- (10-3.25,3) node [black,midway,above, yshift=0.1cm] 
{\footnotesize $\mathcal{O}_1$};
\draw [decorate,decoration={brace,mirror, amplitude=5pt},xshift=0pt, yshift=1pt]
(10-3.5,3) -- (10-5,3) node [black,midway,above, yshift=0.1cm] 
{\footnotesize $\mathcal{O}_2$};
\draw [decorate,decoration={brace,mirror, amplitude=5pt},xshift=0pt, yshift=1pt]
(10-5.25,3) -- (10-6.5,3) node [black,midway,above,yshift=0.1cm] 
{\footnotesize $\mathcal{O}_3$};
\draw (10-0.5,1.5) circle (0.3);
\draw (10-0.5,1.5) node{$P$};
\draw (10-5.87,1.5) node{$H_3$};
\draw (10-4.25,1.5) node{$H_2$};
\draw (10-2.75,1.5) node{$H_1$};
\draw [decorate,decoration={brace,mirror, amplitude=5pt},xshift=0pt, yshift=1pt]
(10-5.25,1.9) -- (10-6.5,1.9) node [black,midway,above,yshift=0.2cm] 
{\small $f^P_3$};
\draw [decorate,decoration={brace,mirror, amplitude=5pt},xshift=0pt, yshift=1pt]
(10-3.5,1.9) -- (10-5,1.9) node [black,midway,above,yshift=0.2cm] 
{\small $f^P_2$};
\draw [decorate,decoration={brace,mirror, amplitude=5pt},xshift=0pt, yshift=1pt]
(10-2.25,1.9) -- (10-3.25,1.9) node [black,midway,above,yshift=0.2cm] 
{\small $f^P_1$};
\draw[decorate,decoration={brace}] 
  ([yshift=0cm]10-6.5, 1.1) -- node[left] {\footnotesize $F_P$} ([yshift=0.1cm]10-6.5,1.8);
\end{tikzpicture}
\caption{The map $\varphi$.}
\end{figure}
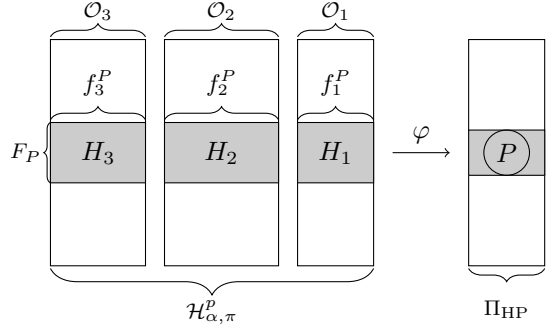
\end{center}

We consider a hypertree $H_j$ in each $f^P_j$. The orbit-stabilizer theorem applied on $\mathcal{O}_j$ gives: 
\begin{equation*}
n!=|\mathcal{O}_j| \times |\operatorname{Aut}_{H_j}|,
\end{equation*}
where $|\operatorname{Aut}_{H_j}|$ is the cardinality of the automorphism group of the rooted hypertree $H_j$.

As $\mathcal{H}^p_{\alpha, \pi} = \bigsqcup_{j=1}^p \mathcal{O}_j$, we obtain the relation: 
\begin{equation}\label{aut1}
\cnapp = n! \times \sum_{j=1}^p \frac{1}{|\operatorname{Aut}_{H_j}|}.
\end{equation}

Let us consider the group $G_P$ of permutations of $\intunn$ fixing $P$. There are exactly $\prod_{j \geq 2} (j-1)!^{\pi_j} \pi_j!$ such permutations. The group $G_P$ acts on the fibre $F_P$ transitively on each $f^P_j$. Indeed, if $\sigma \in \mathfrak{S}_n$ sends a hypertree $H$ of $f^P_k$ to a hypertree $H'$ of $f^P_k$, then as $\varphi(H)=\varphi(\sigma(H))=P$, $\sigma$ stabilizes $P$. Then, the orbit-stabilizer theorem applied on $f^P_j$ gives: 
\begin{equation}\label{eq1}
|f^P_j| \times |\operatorname{Stab}_{G_P} H_j| = |G_P| = \prod_{j \geq 2} (j-1)!^{\pi_j} \pi_j!,
\end{equation}
where $\operatorname{Stab}_{G_P} H_j = \{\sigma \in G_P| \sigma(H_j)=H_j\}$.

We show that $\operatorname{Stab}_{G_P} H_j = \operatorname{Aut}_{H_j}$. As $G_P \subseteq \mathfrak{S}_n$, it is easily shown that $\operatorname{Stab}_{G_P} H_j \subseteq \operatorname{Aut}_{H_j}$. Let us consider $\sigma$ in $\operatorname{Aut}_{H_j}$, then $\varphi(\sigma(H_j)) = \varphi(H_j) = P$ and $\varphi(\sigma(H_j)) = \sigma(P)$: $\sigma$ stabilizes $P$. Therefore, we obtain the relation $\operatorname{Stab}_{G_P} H_j = \operatorname{Aut}_{H_j}$. Combined with Equation \eqref{aut1} and Equation \eqref{eq1}, we get the result, as $\dnap = \sum_{j=1}^ p |f^P_j|$.

\end{proof}

\begin{exple} \label{exple} We consider the following $\pi$-hooked partition $P$:
\begin{equation*}
P= (2)\quad (X|1 \ 5)\quad (X|4 \  3)\quad (X|6),
\end{equation*}
with $\pi=(1, 2)$, where $X|$ represents the hook of the block. 
For $\alpha=(4,2)$, we can construct the following rooted hypertrees (and many others): 
\begin{center}
$T=$
\begin{tikzpicture}
\draw[black, fill=gray!40] (0,0) -- (1,0) -- (1,1) -- (0,0);
\draw[black, fill=gray!40] (1,1) -- (2,0) -- (2,1) -- (1,1);
\draw (0,0) -- (-1,0);
\draw[black, fill=white] (0,0) circle (0. 2);
\draw[black, fill=white] (1,0) circle (0. 2);
\draw[black, fill=white] (1,1) circle (0. 2);
\draw[black, fill=white] (2,0) circle (0. 2);
\draw[black, fill=white] (2,1) circle (0. 2);
\draw[black, fill=white] (-1,0) circle (0. 3);
\draw[black, fill=white] (-1,0) circle (0. 2);
\draw(0,0) node{$6$};
\draw(1,1) node{$1$};
\draw(1,0) node{$5$};
\draw(-1,0) node{$2$};
\draw(2,0) node{$4$};
\draw(2,1) node{$3$};
\end{tikzpicture}
and $T'=$
\begin{tikzpicture}
\draw[black, fill=gray!40] (0,0) -- (1,0) -- (1,1) -- (0,0);
\draw[black, fill=gray!40] (1,1) -- (2,0) -- (2,1) -- (1,1);
\draw (0,0) -- (-1,0);
\draw[black, fill=white] (0,0) circle (0. 2);
\draw[black, fill=white] (1,0) circle (0. 2);
\draw[black, fill=white] (1,1) circle (0. 2);
\draw[black, fill=white] (2,0) circle (0. 2);
\draw[black, fill=white] (2,1) circle (0. 2);
\draw[black, fill=white] (-1,0) circle (0. 3);
\draw[black, fill=white] (-1,0) circle (0. 2);
\draw(0,0) node{$6$};
\draw(1,1) node{$5$};
\draw(1,0) node{$1$};
\draw(-1,0) node{$2$};
\draw(2,0) node{$4$};
\draw(2,1) node{$3$};
\end{tikzpicture}. 
\end{center}

We describe an example of the action of the group $G_P$ on the fibre of $P$. Considering $T$ and $T'$, which are in the fibre of $P$, the permutation $(3  \ 4)$ fixes $T$ and $T'$ but the permutation $(1 \ 5)$ sends $T$ to $T'$. Then $T$ and $T'$ are in the same orbit.

The following hypertree $T''$ is not in the orbit of $T$ and $T'$:
\begin{center}
\begin{tikzpicture}
\draw[black, fill=gray!40] (0,0) -- (1,0) -- (1,1) -- (0,0);
\draw[black, fill=gray!40] (-1,0) -- (-2,0) -- (-2,1) -- (-1,0);
\draw (0,0) -- (-1,0);
\draw[black, fill=white] (0,0) circle (0. 2);
\draw[black, fill=white] (1,0) circle (0. 2);
\draw[black, fill=white] (1,1) circle (0. 2);
\draw[black, fill=white] (-2,0) circle (0. 2);
\draw[black, fill=white] (-2,1) circle (0. 2);
\draw[black, fill=white] (-1,0) circle (0. 3);
\draw[black, fill=white] (-1,0) circle (0. 2);
\draw(0,0) node{$6$};
\draw(1,1) node{$4$};
\draw(1,0) node{$3$};
\draw(-1,0) node{$2$};
\draw(-2,0) node{$1$};
\draw(-2,1) node{$5$};
\end{tikzpicture}. 
\end{center}
\end{exple}
\medskip

We now want to compute the number $\dnap$ of constructions of a hypertree of valency set $\alpha$ from a $\pi$-hooked partition $P_\pi$ This is also the cardinality of the fibre $\varphi^{-1}(P_\pi)$. It is given by a bijection introduced by R. Bacher in \cite{BacHyp}, which we recall for self-containment of this article:

\begin{lem} \label{lem2}
Given a pair $(\alpha, \pi)$ in $\mathcal{P}_n$ and a $\pi$-hooked partition $P_\pi$, there is a bijection between the set of constructions of a rooted hypertree of valency set $\alpha$ from $P_\pi$ and the set of words on $\intunn$, of length $\sum_{j \geq 2} \pi_j-1$, with $\sum_{i \geq 2} \alpha_i$ different letters, where $\alpha_i$ letters appear $i-1$ times for all $i \geq 2$. 
\end{lem}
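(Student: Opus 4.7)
The approach is a Prüfer-type leaf-pruning bijection generalising the classical Prüfer code for trees.

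For the forward map, fix $P_\pi$ with root $r$ (the singleton block). Given a rooted hypertree $T \in \mathcal{H}^p_{\alpha, \pi}$, run the following algorithm: while $T$ has more than one edge, find the smallest non-root leaf $\ell$ of $T$ and its unique incident edge $e$, delete $\ell$ from $e$; if $e$ now contains only one vertex, delete $e$ from $T$ and append that remaining vertex to the output word. The algorithm terminates with exactly one surviving edge, producing a word of length $N - 1$, where $N = \sum_{j \geq 2} \pi_j$.

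To verify the statistics of this word, observe that connectedness is preserved by the pruning and $r$ is never removed, so at every stage $T$ is a connected sub-hypertree containing $r$; hence the surviving edge contains $r$. The key claim is: when an edge $e$ is removed, its remaining vertex is the parent of $e$, i.e.\ the unique vertex of $e$ closest to $r$ in the rooted structure. Indeed, any non-parent vertex of $e$ has $e$ as its parent-edge, so it can be pruned once all of its child-edges have been removed; on the other hand, a depth-induction shows that "higher" edges are pruned only after "lower" ones, so the parent of $e$ still has another incident edge present (the edge immediately above $e$, or another child-edge of $r$ when $e$ is incident to $r$) and cannot be a leaf while $e$ exists. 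Counting parent-edge incidences, any non-root vertex $v$ is the parent of $\text{val}(v) - 1$ edges and $r$ is the parent of all $\text{val}(r)$ of its incident edges; since the one surviving edge has $r$ as its parent, every vertex $v$ of valency $i \geq 2$ is recorded precisely $i - 1$ times and vertices of valency $1$ never appear, matching the claimed multiset.

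For the inverse, given $P_\pi$ and a word $w$, the valency of each vertex is read off as one plus its multiplicity in $w$, and the hypertree is reconstructed by a reverse-Prüfer procedure: process $w$ letter by letter, at each stage identifying, via a "smallest current leaf" rule based on the remaining suffix of $w$ and on which regular vertices of unprocessed blocks are still present, which block is being removed at this stage and attaching its hook to the current letter; after the word is exhausted, the remaining unprocessed block gets its hook attached to $r$. The bijection is then verified by a routine induction on $N$. The main obstacle I anticipate is handling edges of original size greater than $2$: such an edge is pruned over several iterations, most of them "silent" (producing no output letter), so the inverse must correctly reconstruct the interleaving of silent and recording iterations — this bookkeeping is the most delicate part of the argument, but once it is in place, everything is standard.
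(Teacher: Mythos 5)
Your forward map and your verification of the word statistics are essentially sound: the observation that an edge can only be emptied after all of its descendant edges, so that the last surviving vertex of an emptied edge is always its petiole, correctly yields that each vertex $v$ is recorded $\operatorname{val}(v)-1$ times and that the distinct letters are exactly the $\sum_{i\geq 2}\alpha_i$ vertices of valency at least $2$. (A small slip: when $e$ is incident to the root, its petiole survives simply because the root is never deleted, not because $r$ has ``another child-edge''.) The genuine gap is exactly where you flag it: the inverse map is never actually constructed, and since the lemma asserts a bijection, this is the heart of the proof. Your description of the inverse (``identifying, via a smallest-current-leaf rule \dots which block is being removed at this stage'') states what the inverse would have to accomplish rather than giving a procedure, and the claim that ``the bijection is then verified by a routine induction on $N$'' does not go through for your map: after one recording step the intermediate object is a hypertree some of whose edges have been partially shrunk, so it is \emph{not} a rooted hypertree over a hooked sub-partition of $P_\pi$, and there is no smaller instance of the same statement to induct on. Concretely, to invert you must recover, from the word and $P_\pi$ alone, the order in which the edges are completed, and this order depends on how the silent deletions of vertices lying in different blocks interleave; proving that this interleaving is always reconstructible (hence that your map is injective and surjective) is the entire content of the bijection and is left undone.

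The paper avoids this difficulty by pruning a whole minimal leaf edge at each step and recording its petiole, rather than deleting one vertex at a time. With that choice the object obtained after one step is again a rooted hypertree over the hooked partition with one block removed, the code satisfies $w=w_1w'$ where $w'$ is the code of the smaller instance, and the induction really is routine. I would recommend either switching to edge-at-a-time pruning, or, if you keep the vertex-at-a-time map, writing out the inverse explicitly and proving it is a two-sided inverse; the latter is substantially more work than the phrase ``once it is in place, everything is standard'' suggests.
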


\begin{proof}
We prove this lemma using a Pr\"{u}fer code type proof. We want to count the number of different rooted hypertrees which can be constructed from a $\pi$-hooked partition $P_\pi$ and which have $\alpha_i$ vertices of valency $i$ for all $i \geq 1$. Given such a rooted hypertree, we recursively construct a variant of Pr\"{u}fer code. 

 If the hypertree has only one edge of size $n$, then we can separate the root from the edge and put a hook instead: we obtain two blocks, the one of the root and another hooked one of size $n-1$. Given a $\pi$-hooked partition, we assemble the two blocks of the partition into one edge and it gives back the hypertree. The associated word is the empty word, which is of length $0$. 

 If the rooted hypertree $H$ has more than one edge, we consider the set of leaves of the hypertree, i.e. the set of edges whose vertices but the closest from the root, called the \emph{petiole}, are of valency $1$. We can order the set of leaves according to their minimal unshared element. The petiole of the minimal leaf will be the first letter $w_1$ of the word $w$ associated with $H$. We suppose that this vertex has a valency $v$. We denote by $s_m$ the size of the minimal leaf. Then deleting the minimal leaf and its $s_m-1$ vertices different from the petiole, we obtain a rooted hypertree $H'$ on $n-s_m +1$ vertices in which the valency of the petiole $w_1$ has decreased by one, the number of vertices of valency $1$ has decreased by $s_m-1$ and all the other vertices have the same valency. As vertices of valency $1$ do not appear in the word associated with the hypertree, the deletion of these vertices only decreases by one the number of occurrences of $w_1$ in the word associated with $H'$ compared with the word associated with $H$. If $w'$ is the word associated with $H$ and $H'$, we obtain the relation $w=w_1w'$.
 
   Moreover, the hooked partition associated with $H'$ can be obtain from $P_\pi$ by deleting the hooked block of $P_\pi$ containing the vertices of valency $1$ of the minimal leaf. We then construct the word $w'$ associated with $H'$: it is a word of length $\sum_{j \geq 2} \pi_j-2$ letters, with $\sum_{i \geq 2} \alpha_i$ different letters, where $\alpha_i$ letters appear $i-1$ times for all $i \neq v, v-1$, $\alpha_v -1$ letters appear $v-1$ times and $\alpha_{v-1} +1$ letters appear $v-2$ times. Let us remark that the vertex $w_1$ is of valency $v-1$ in $H'$ so appear $v-2$ times in $w'$. Then, the letter $w_1$ appears $v-1$ times in the word $w=w_1 w'$ and the word $w=w_1 w'$ satisfies the required conditions. 

\medskip

If we have a $\pi$-hooked partition and a word $w$ satisfying the required conditions, we can build the associated rooted hypertrees by ordering the blocks with a hook whose elements are not letters of $w$ according to their minimal element. Then we attach the least element of these blocks to the last letter of the word, which is an element of another block and delete this last letter. We repeat these operations until the word is empty. We finally obtain a rooted hypertree and this operation is the inverse of the construction above. Hence, this gives a bijection between the construction of rooted hypertrees from hooked partitions and the set of words of the lemma.

\end{proof}

\begin{exple}
Considering the hooked partition $P$ and the hypertrees $T$, $T'$ and $T''$ of Example \ref{exple}, the words respectively associated to the construction of $T$, $T'$ and $T''$ from $P$ are: $1 \, 6$, $5 \, 6$, and $2 \, 6$. 

The hypertree whose construction from $P$ is associated with the word $6 \  2$ is: 
\begin{center}
\begin{tikzpicture}
\draw[black, fill=gray!40] (0,0) -- (1,0) -- (1,1) -- (0,0);
\draw[black, fill=gray!40] (-1,0) -- (-2,0) -- (-2,1) -- (-1,0);
\draw (0,0) -- (-1,0);
\draw[black, fill=white] (0,0) circle (0. 2);
\draw[black, fill=white] (1,0) circle (0. 2);
\draw[black, fill=white] (1,1) circle (0. 2);
\draw[black, fill=white] (-2,0) circle (0. 2);
\draw[black, fill=white] (-2,1) circle (0. 2);
\draw[black, fill=white] (-1,0) circle (0. 3);
\draw[black, fill=white] (-1,0) circle (0. 2);
\draw(0,0) node{$6$};
\draw(1,1) node{$1$};
\draw(1,0) node{$5$};
\draw(-1,0) node{$2$};
\draw(-2,0) node{$4$};
\draw(-2,1) node{$3$};
\end{tikzpicture}. 
\end{center}

There are $36$ words associated with the $\pi$-hooked partition: $6$ corresponding to hypertrees with a vertex of valency $3$ and the others of valency $1$, and $30$ corresponding to hypertrees with two vertices of valency $2$ and the others of valency $1$.

\end{exple}
\begin{lem} \label{lem3}
The number of words on $k$ letters, on an alphabet of size $n$, with $\alpha_i$ letters repeated $i-1$ times is: 
\begin{equation}
\dnap= \frac{k! \times n!}{\prod_{i \geq 2} (i-1)!^{\alpha_i} \alpha_i!}. 
\end{equation}
\end{lem}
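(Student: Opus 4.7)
The plan is to count these words in two independent stages: first select which letters of $\intunn$ are used and with which multiplicities, then arrange the chosen multiset into a word. Since $\alpha_1$ letters do not appear at all in the word (they are ``repeated'' $0$ times), while for each $i \geq 2$ exactly $\alpha_i$ letters each appear $i-1$ times, every word enumerated by the lemma is obtained exactly once by this two-stage construction, so the count factors as a product.

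First I would partition the alphabet $\intunn$ into groups of sizes $\alpha_1, \alpha_2, \ldots$, with the group of size $\alpha_i$ consisting of the letters designated to appear $i-1$ times in the word. Since $\sum_{i\geq 1} \alpha_i = n$ by the first equation of \eqref{theeq}, this is a multinomial choice contributing $\dfrac{n!}{\prod_{i \geq 1}\alpha_i!}$ possibilities.

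Then, with this selection fixed, I would count the permutations of the corresponding multiset as a word of length $k$. Before applying the standard multiset-permutation formula, I need to check that the length is consistent: the total number of letter occurrences is $\sum_{i \geq 2}(i-1)\alpha_i = \sum_i i\alpha_i - \sum_i \alpha_i = (n + \sum_{j \geq 2}\pi_j - 1) - n = k$, using the first and third equations of \eqref{theeq}. Since the $\alpha_i$ letters are distinguishable (they come from distinct elements of $\intunn$), the number of such arrangements is $\dfrac{k!}{\prod_{i \geq 2}((i-1)!)^{\alpha_i}}$.

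Multiplying the two stage counts gives the stated expression for $\dnap$, the trivial factor $(1-1)!^{\alpha_1} = 1$ allowing the denominator to be written with the product starting at $i=1$ or at $i=2$ without change. The argument reduces to a routine product of a multinomial coefficient and a multiset-permutation count, so the only genuine point of care is the length verification via \eqref{theeq}; no substantial obstacle is expected beyond this bookkeeping.
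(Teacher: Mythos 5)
Your proof is correct and follows essentially the same route as the paper: a multinomial choice $\tfrac{n!}{\prod_i \alpha_i!}$ of which letters receive which multiplicities, followed by the multiset-permutation count $\tfrac{k!}{\prod_i (i-1)!^{\alpha_i}}$ (the paper phrases this second step as sequentially choosing $\binom{k-p}{i}$ positions, which telescopes to the same factor). Your explicit verification that $\sum_{i\geq 2}(i-1)\alpha_i = k$ via Equations \eqref{theeq} is a small addition the paper leaves implicit.
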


\begin{proof}
The number of words on $\intunn$, of length $k$, with $\sum_{i \geq 2} \alpha_i$ different letters, where $\alpha_i$ letters appearing $i-1$ times for all $i \geq 2$ is:
\begin{equation*}
\frac{k! \times n!}{\prod_{i \geq 1} (i-1)!^{\alpha_i} \alpha_i!}. 
\end{equation*}
Indeed, there are $\binom{n}{\alpha_1, \alpha_2, \dots}$ ways to choose the letters of the word. As the letters are elements of $\intunn$, there is a natural total order on the set of letters appearing $i$ times. We consider letters according to their orders. Then, if $p$ positions in the word have already been chosen, we have $\binom{k-p}{i}$ choices for the positions of a letter appearing $i$ times. Combining these enumerations gives the result. 
\end{proof}

Thanks to this lemma, we obtain the following proposition: 

\begin{prop}  If the tuples $\alpha=(\alpha_1, \dots)$ and $\pi=(\pi_2, \dots)$ satisfy Equations \eqref{theeq}, the number of hypertrees with $\alpha_i$ vertices of valency $i$ and $\pi_j$ edges of size $j$, with $i \geq 1$ and $j \geq 2$ is given by:
\begin{equation}
\cnap= \frac{1}{n} \times \frac{n!}{\prod_{j \geq 2} (j-1)!^{\pi_j} \pi_j!} \times \frac{k! \times n!}{\prod_{i \geq 1} (i-1)!^{\alpha_i} \alpha_i!},
\end{equation}
with $k= \sum_{j\geq 2}\pi_j -1$. 
\end{prop}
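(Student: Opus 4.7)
The plan is to read off the proposition as an immediate assembly of the three preceding lemmas. Lemma \ref{lem1} reduces the counting of hypertrees of type $(\alpha,\pi)$ to the counting of constructions $d^n_{\alpha,\pi}$ of a rooted hypertree from any single $\pi$-hooked partition, via
\[
c^n_{\alpha,\pi} = \frac{1}{n}\cdot\frac{n!}{\prod_{j\geq 2}(j-1)!^{\pi_j}\,\pi_j!}\cdot d^n_{\alpha,\pi}.
\]
Then Lemma \ref{lem2} encodes those constructions bijectively as words of length $k=\sum_{j\geq 2}\pi_j-1$ on the alphabet $\intunn$, using $\alpha_i$ distinct letters each repeated $i-1$ times for every $i\geq 2$ (with $\alpha_1$ letters absent from the word). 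Finally Lemma \ref{lem3} counts those words as $\frac{k!\,n!}{\prod_{i\geq 1}(i-1)!^{\alpha_i}\alpha_i!}$, so that $d^n_{\alpha,\pi}$ is given explicitly.

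First I would check that the parameters match up: the criterion \eqref{theeq} ensures that $k=\sum_{j\geq 2}\pi_j-1$ is well-defined and non-negative (when a hypertree with the prescribed $(\alpha,\pi)$ exists), and that the number of letters used, $\sum_{i\geq 2}\alpha_i$, is $n-\alpha_1$, compatible with choosing letters from $\intunn$. These consistency checks are really just bookkeeping and follow at once from Equations~\eqref{theeq} and the definitions.

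The substitution is then a one-line computation: plug the value of $d^n_{\alpha,\pi}$ from Lemma \ref{lem3} into the expression of Lemma \ref{lem1} to obtain
\[
c^n_{\alpha,\pi} = \frac{1}{n}\cdot\frac{n!}{\prod_{j\geq 2}(j-1)!^{\pi_j}\pi_j!}\cdot\frac{k!\,n!}{\prod_{i\geq 1}(i-1)!^{\alpha_i}\alpha_i!},
\]
which is the claimed formula. There is no genuine obstacle here; the real work has already been done in Lemmas~\ref{lem1}, \ref{lem2} and \ref{lem3}. The only thing worth stressing in the writeup is that the convention $0!=1$ makes the factor $(i-1)!^{\alpha_1}=1$ harmless, so that extending the product to $i\geq 1$ only inserts the expected $\alpha_1!$ coming from the multinomial choice of which letters of $\intunn$ appear in the word.
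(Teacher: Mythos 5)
Your proposal matches the paper's proof exactly: the proposition is obtained by substituting the word count of Lemma \ref{lem3}, via the bijection of Lemma \ref{lem2}, into the reduction of Lemma \ref{lem1}. The paper states this combination in one line, and your additional bookkeeping checks are correct but not part of the original argument.
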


\begin{proof}
This theorem follows from Lemmas \ref{lem1}, \ref{lem2} and \ref{lem3}. 
\end{proof}

This proposition associated with Equation \eqref{defCoprod} gives the coproduct: 

\begin{thm}\label{thmcoprod} If the set $\mathcal{P}(n)$ is the set of tuples $\alpha=(\alpha_1, \dots, \alpha_k)$ and $\pi=(\pi_2, \dots, \pi_l)$ satisfying Equations \eqref{crit}, the coproduct of $h_n$ in $\mathcal{B}_{HT}$ is given by:
\begin{equation*}
\Delta(h_n)= \frac{1}{n} \times \sum_{(\alpha, \pi) \in \mathcal{P}(n)} \frac{n!}{\prod_{j \geq 2} (j-1)!^{\pi_j} \pi_j!} \times \frac{k! \times n!}{\prod_{i \geq 1} (i-1)!^{\alpha_i} \alpha_i!} \prod_{i=2}^k p_i^{\alpha_i} \otimes \prod_{j=2}^l h_j^{\pi_j},
\end{equation*}
with $k= \sum_{j\geq 2}\pi_j -1$. 
\end{thm}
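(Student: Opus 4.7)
The plan is very short because this theorem is essentially a repackaging of the results just proved: the content of Lemmas \ref{lem1}, \ref{lem2}, \ref{lem3} has already been combined in the preceding proposition to compute $c^n_{\alpha,\pi}$ explicitly, so the only task is to plug this into the coproduct expansion.

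First, I would recall Equation \eqref{defCoprod}, which by Lemma \ref{LMcCM}(a)--(b) already tells us that $\Delta(h_n)$ is a sum of terms of the form $p_\alpha \otimes h_\pi$, indexed by the valency-and-edge-size profile $(\alpha,\pi)$ of a hypertree on $n$ vertices, weighted by the number $c^n_{\alpha,\pi}$ of hypertrees realizing that profile. The index set $\mathcal{P}_n = \mathcal{P}(n)$ of admissible profiles has been characterized by the proposition associated with Equations \eqref{theeq}, which are exactly \eqref{crit}. Under the shorthand $p_\alpha = \prod_{i\geq 1} p_i^{\alpha_i}$ and $h_\pi = \prod_{j\geq 2} h_j^{\pi_j}$ (with $p_1=1=h_2$ and $k = \sum_{j\geq 2} \pi_j - 1$ from the third equation of \eqref{crit}), the coproduct thus reads
\begin{equation*}
\Delta(h_n) = \sum_{(\alpha,\pi) \in \mathcal{P}(n)} c^n_{\alpha,\pi}\, \prod_{i \geq 2} p_i^{\alpha_i} \otimes \prod_{j \geq 2} h_j^{\pi_j}.
\end{equation*}

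Second, I would substitute the closed form for $c^n_{\alpha,\pi}$ obtained in the preceding proposition, namely
\begin{equation*}
c^n_{\alpha,\pi} = \frac{1}{n} \cdot \frac{n!}{\prod_{j \geq 2} (j-1)!^{\pi_j} \pi_j!} \cdot \frac{k! \times n!}{\prod_{i \geq 1} (i-1)!^{\alpha_i} \alpha_i!},
\end{equation*}
factor the $1/n$ outside the sum, and read off exactly the stated formula. There is no obstacle: the entire combinatorial difficulty has already been absorbed into the bijective proofs of Lemmas \ref{lem1} (orbit-stabilizer argument relating $c^{n\bullet}_{\alpha,\pi}$ to $d^n_{\alpha,\pi}$), \ref{lem2} (Prüfer-type bijection between constructions and words), and \ref{lem3} (enumeration of the relevant words). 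The only thing that one should double-check is that the set $\mathcal{P}_n$ on which $c^n_{\alpha,\pi}$ fails to vanish coincides with $\mathcal{P}(n)$, which is precisely the content of the proposition preceding Example \ref{criter}; once this is noted, the sums over $\mathcal{P}_n$ and $\mathcal{P}(n)$ agree and the theorem follows.
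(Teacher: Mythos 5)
Your proposal is correct and follows exactly the paper's own route: the theorem is obtained by substituting the closed form for $c^n_{\alpha,\pi}$ from the preceding proposition into the expansion \eqref{defCoprod}, with the index set identified via the criterion \eqref{theeq}. Nothing further is needed.
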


\begin{exple} We can now compute the coproduct of some $h_n$. Using the values of $(\alpha, \pi)$ on which $c^n_{\alpha, \pi}$ does not vanish, computed in Example \ref{criter}, we obtain for $h_4$: 
\begin{align*}
\Delta h_4 &= \frac{1}{4}\times \frac{4!}{3!} \times \frac{0!4!}{4!} \times p_1^4 \otimes h_4 +\frac{1}{4}\times \frac{4!}{2!} \times \frac{1!4!}{3!} \times p_1^3 p_2 \otimes h_2 h_3 \\&+ \frac{1}{4}\times \frac{4!}{3!} \times \frac{2!4!}{2!2!} \times p_1^2 p_2^2 \otimes h_2^3 + \frac{1}{4}\times \frac{4!}{3!} \times \frac{2!4!}{3!2!} \times p_1^3 p_3 \otimes h_2^3, \\
\Delta h_4 &= 1 \otimes h_4 + 12 \ p_2 \otimes h_2 h_3 + 12 \ p_2^2 \otimes h_2^3 + 4 \ p_3 \otimes h_2^3. 
\end{align*}
When summing the coefficients in this coproduct, we obtain as expected the total number of hypertrees on $4$ vertices, which is $29$.
\end{exple}

	\subsection{Computation of the Moebius number of the augmented hypertree posets} \label{compmoeb}

On any incidence Hopf algebra $\mathcal{H}$ with generators $g_n$ (which are posets), we define the characters $\zeta$ and $\mu$ for all $n \geq 1$ by:
\begin{equation*}
\zeta: g_n \mapsto 1
\end{equation*}
and 
\begin{equation*}
\mu: g_n \mapsto \mu(g_n),
\end{equation*}
where $\mu(g_n)$ is the Moebius number of the poset $g_n$.

These characters are the inverse of each other. It means that if $\epsilon$ is the counit of $\mathcal{H}$ and $\ast$ is the convolution on characters, we have: 
\begin{equation*}
\zeta \ast \mu = \mu \ast \zeta = \epsilon. 
\end{equation*}

Indeed, these equations come from the definitions of the convolution and the Moebius function: 
\begin{equation*}
\mu \ast \zeta ([h,h]) = \mu([h,h])= 1 
\end{equation*}
and
\begin{equation*}
\mu \ast \zeta ([h,h']) = \sum_{h \leq x \leq h'} \mu([h,x]) \times 1 = \mu(h,h') + \sum_{h \leq x < h'} \mu(h,x),
\end{equation*}
for all intervals $[h,h']$, $h<h'$ in $\mathcal{H}$. 

According to the definition of the Moebius function \ref{defmoeb}, $\mu \ast \zeta$ and $\zeta \ast \mu$ vanish on any non trivial interval. 

We want to compute the Moebius number of the augmented hypertree posets. We thus use Proposition \ref{propdec}. To prove that the characters satisfy the assumptions of the proposition, we need the following definition and lemma:
\begin{defi}
If $P$ is a finite poset with a unique least element, then, we define a \emph{sum function} by $s(P)=\sum_{x \in P} \mu (\hat{0},x)$. 
\end{defi}
If $\hat{P}$ is the poset obtained from $P$ by the addition of a greatest element $\hat{1}$, then $\mu (\hat{P}) = - s(P)$. 

\begin{lem}[Lemma 4.4 in \cite{McCM}]\label{lem4. 4} If $P_i$, $i \in [k]$ is a list of finite posets each with a unique minimal element and $Q=\prod_{i=1}^k P_i$, then $s(Q)=\prod_{i=1}^{k} s(P_i)$. 
\end{lem}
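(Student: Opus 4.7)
My plan is to reduce the statement to the standard multiplicativity of the Möbius function on products of posets, and then split the sum defining $s(Q)$ as a product of sums.

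First I would reduce to the case $k=2$ by induction on $k$: the case $k=1$ is trivial, and $\prod_{i=1}^{k+1} P_i$ is poset-isomorphic to $\bigl(\prod_{i=1}^k P_i\bigr) \times P_{k+1}$, both of which again have a unique minimal element (namely the tuple of minimal elements). So it suffices to show $s(P \times P') = s(P) \cdot s(P')$ whenever $P$ and $P'$ each have a unique least element $\hat{0}_P$, $\hat{0}_{P'}$.

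The key input is the well-known multiplicativity of the Möbius function on direct products: for any $(x,y) \in P \times P'$, the interval $[(\hat{0}_P, \hat{0}_{P'}), (x,y)]$ in $P \times P'$ is isomorphic to $[\hat{0}_P, x] \times [\hat{0}_{P'}, y]$, and hence
\begin{equation*}
\mu_{P \times P'}\bigl((\hat{0}_P, \hat{0}_{P'}), (x,y)\bigr) = \mu_P(\hat{0}_P, x) \cdot \mu_{P'}(\hat{0}_{P'}, y).
\end{equation*}
This is standard and can be verified by induction on the size of the interval using the recursive definition of $\mu$ from Definition \ref{defmoeb}, noting that for a non-trivial interval in the product the recursive sums factor because every element strictly below $(x,y)$ is of the form $(x',y')$ with $(x', y') \leq (x,y)$ and $(x',y') \neq (x,y)$.

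With this identity in hand, the computation is immediate:
\begin{align*}
s(P \times P') &= \sum_{(x,y) \in P \times P'} \mu_{P \times P'}\bigl((\hat{0}_P, \hat{0}_{P'}), (x,y)\bigr) \\
&= \sum_{x \in P} \sum_{y \in P'} \mu_P(\hat{0}_P, x)\, \mu_{P'}(\hat{0}_{P'}, y) \\
&= \left(\sum_{x \in P} \mu_P(\hat{0}_P, x)\right)\left(\sum_{y \in P'} \mu_{P'}(\hat{0}_{P'}, y)\right) = s(P)\, s(P').
\end{align*}
There is no real obstacle here; the only non-trivial ingredient is the multiplicativity of $\mu$ on products, which is a classical fact. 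Combining with the induction step gives the lemma for arbitrary $k$.
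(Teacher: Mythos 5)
Your proof is correct. Note that the paper itself gives no proof of this statement --- it is quoted verbatim as Lemma 4.4 of \cite{McCM} --- so there is no internal argument to compare against; your reduction to $k=2$ followed by the classical multiplicativity of $\mu$ on direct products (applied to the intervals $[(\hat{0}_P,\hat{0}_{P'}),(x,y)] \cong [\hat{0}_P,x]\times[\hat{0}_{P'},y]$) and the factorization of the double sum is exactly the standard argument one finds in the cited source.
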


Then we can define the maps from $\Spnr$ to $\mathbb{Q}$, for 
any poset $p$ of $\Spnr$ with both a least and a greatest element and any poset $h$ with a least but no greatest element : 
\begin{equation*}
 \widetilde{\zeta} (p) =\zeta (p)=1, \text{ } \widetilde{\zeta} (h)=\zeta (h)=1,
\end{equation*} 
and
\begin{equation*}
\widetilde{\mu} (p)= \mu (p) , \text{ } \widetilde{\mu} (h) = s(h). 
\end{equation*}

These maps satisfy the following property due to their definitions and Lemma \ref{lem4. 4}, for all $i \geq 2$ and $j \geq 3$: 
\begin{equation*}
 \widetilde{\zeta} (\prod_{i=1}^{k} p_i) = \prod_{i=1}^{k} \widetilde{\zeta} (p_i), \text{ } \widetilde{\zeta} (\prod_{j=1}^{l} h_j)=\prod_{j=1}^{l} \widetilde{\zeta}(h_j),
\end{equation*} 
and
\begin{equation*}
 \widetilde{\mu} (\prod_{i=1}^{k} p_i) = \prod_{i=1}^{k} \widetilde{\mu} (p_i), \text{ } \widetilde{\mu} (\prod_{j=1}^{l} h_j)=\prod_{j=1}^{l} \widetilde{\mu}(h_j). 
\end{equation*}

As these maps satisfy the conditions of Proposition \ref{propdec}, we apply it in the following subsections. As partition and hypertree posets are not mixed in the coproduct of hypertree poset, the computation of the convolution of $\mu$ and $\zeta$ will be given by a computation using only the values of  $\widetilde{\zeta}$ and $\widetilde{\mu}$ on the partition and the hypertree posets. The first part of this section will be devoted to the equation $\zeta \ast \mu=\epsilon$ and the second part will be devoted to the equation $\mu \ast \zeta = \epsilon$. 

		\subsubsection{Right-sided computation}
		
In this section, we give a simplified proof of the result of J. McCammond and J. Meier on the computation of the Moebius number of the augmented hypertree poset. 		
		
Applying the Moebius function at the right side of the coproduct, we obtain:
\begin{equation*}
\zeta \ast \mu (\widehat{h_n})= 0,
\end{equation*}
for all $n \geq 2$. 

Hence, applying the computation of the coproduct of Theorem \ref{thmcoprod} and Proposition \ref{propdec}, we obtain the following equality for $n \geq 2$: 
\begin{equation*}
0= - \sum \widetilde{\mu}(h_n^{(2)}) + 1,
\end{equation*}
where $\Delta(h_n) = \sum h_n^{(1)} \otimes h_n^{(2)}$.

Using Lemma \ref{LMcCM}, the definition of the coproduct on $\Spnr$ and the multiplicativity of $-\mu$, we thus obtain: 
\begin{equation}\label{eqmu}
\mu(\HTn) = \sum_{\substack{ h \in \htn, \\ h>\hat{0}}} \prod_{i \in ES(h)} -\mu(\HTi) +(-1)^n,
\end{equation}
where $ES(h)$ is the multiset of sizes of the edges of $h$. 

Computing the first terms gives:
\begin{equation*}
\mu(\HTd) = -1,
\end{equation*} 
\begin{equation*}
\mu(\HTt) = 3 \times (-\mu(\HTd))^2 -1 = 2,
\end{equation*} 
and
\begin{equation*}
\mu(\HTq) = 13 \times (-\mu(\HTt))+ 16 \times (-\mu(\HTd))^3 +1 = -26+16+1 = -9. 
\end{equation*}

To obtain a closed formula, we consider the exponential generating series of hypertrees with a weight $-\mu(\HTi)$ for each edge of size $i$: 
\begin{equation*}
T(x)=-x+\sum_{n \geq 2} \sum_{ h \in \htn} \prod_{i \in ES(h)} \left(-\mu(\HTi) \right) \frac{x^n}{n!},
\end{equation*}
where $ES(h)$ is the multiset of edge sizes of hypertree $h$. 
Using Equation \eqref{eqmu}, we obtain:
\begin{equation*}
T(x)=-x-\sum_{n \geq 2} \frac{(-x)^n}{n!} = 1-e^{-x}. 
\end{equation*}

Moreover, it has been proven by Kalikow in \cite{Kalikow1999} that the derivative of $T$ satisfies the following functional equation: 
\begin{thm}[Kalikow] The generating series $T$ satisfies the following equation: 
\begin{equation*}
x T'(x)=x \times \exp(y(x)) \text{ where } y(x)=\sum_{j \geq 1} -\mu(\HTjpu) \frac{x^j T'(x)^j}{j!}
\end{equation*}
\end{thm}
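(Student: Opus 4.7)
The plan is to prove Kalikow's identity by a root-decomposition of rooted hypertrees and the exponential formula. First I would reinterpret $xT'(x)$ combinatorially: the operator $x\frac{d}{dx}$ applied to an EGS of the form $\sum a_n \frac{x^n}{n!}$ produces $\sum n a_n \frac{x^n}{n!}$, which is the EGS obtained by distinguishing a vertex in each underlying structure. Consequently $R(x) := xT'(x)$ is the EGS of rooted hypertrees, where each hypertree is weighted by $\prod_{i \in ES(h)} (-\mu(\HTi))$ and the one-vertex base case carries the weight dictated by the convention $T(x)=-x+\cdots$.

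Second, I would carry out the structural decomposition at the root. Let $H$ be a rooted hypertree with root $r$. Then $r$ is incident to a (possibly empty) unordered multiset of edges $\{e_1,\ldots,e_m\}$. Each $e_k$ has some size $s_k\geq 2$ and contains $r$ together with $s_k-1$ other vertices. Deleting $r$ from $H$ splits the remainder into $\sum_k(s_k-1)$ connected sub-hypertrees, one rooted at each non-root vertex of an incident edge; and conversely, any such configuration reassembles uniquely into a rooted hypertree at $r$. This is the hypertree analogue of the rooted-tree decomposition at the root and follows from the unique-path property defining hypertrees.

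Third, I would translate the decomposition to generating series via the exponential formula. A single edge of size $s$ attached at $r$ contributes the weight $-\mu(\HTs)$ together with an unordered choice of $s-1$ labelled rooted sub-hypertrees, i.e.\ the EGS contribution
\[
(-\mu(\HTs))\,\frac{R(x)^{s-1}}{(s-1)!}.
\]
Since the multiset of edges incident to $r$ is an unordered labelled assembly of such "petals", the exponential formula gives
\[
R(x)=x\,\exp\!\Bigl(\sum_{s\geq 2}(-\mu(\HTs))\,\frac{R(x)^{s-1}}{(s-1)!}\Bigr).
\]
Setting $j=s-1$ and substituting $R(x)=xT'(x)$ turns the argument of $\exp$ into $y(x)=\sum_{j\geq 1}(-\mu(\HTjpu))\,\frac{x^jT'(x)^j}{j!}$ and yields $xT'(x)=x\exp(y(x))$, as claimed.

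The one subtle point is bookkeeping the sign convention $T(x)=-x+\cdots$: the combinatorial root-decomposition naturally produces an $x$-prefactor representing "root plus no edges", but $R(x)=xT'(x)$ has $-x$ at degree one. I would resolve this by declaring that the one-vertex rooted hypertree itself carries weight $-1$ in the weighting scheme, so that the $x$ on the right of the identity is to be interpreted with this convention (equivalently, one could work with $\tilde R(x)=-xT'(x)$ and track a compensating sign through the exponential). Once the base-case weight is fixed consistently, the decomposition above applies term-by-term and the functional equation follows. The genuinely content-bearing step is the root-decomposition itself; everything else is routine application of the exponential formula.
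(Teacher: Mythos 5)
The paper offers no proof of this statement: it is quoted from Kalikow's thesis and then used as a black box, so your argument cannot be compared with an in-paper proof and has to stand on its own. Its core does stand. Decomposing a rooted hypertree into its root together with an unordered collection of ``petals'' (an edge of size $s$ through the root plus a rooted sub-hypertree hanging from each of its $s-1$ other vertices), checking via the unique-walk property that this is a bijection, and applying the exponential formula is exactly the standard proof of Kalikow's identity, and each petal does contribute $(-\mu(\widehat{HT_s}))\,R(x)^{s-1}/(s-1)!$ as you say. One small imprecision: you must delete the root \emph{and every edge containing it}, not just the vertex, before the remainder falls apart into $\sum_k(s_k-1)$ components; deleting only the vertex leaves the truncated edges behind and the non-root vertices of a common edge stay connected.

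The genuine gap is your treatment of the sign. Assigning weight $-1$ to ``the one-vertex rooted hypertree itself'' is not a local weight --- it is not a product over edges --- so it does not propagate through the recursion: a single-vertex sub-hypertree sitting inside a petal of a larger hypertree is not the global one-vertex structure, so for your decomposition to close term-by-term the series inside the exponential would have to be the unmodified one, not the one appearing on the left. Concretely, with $T=-x+\sum_{n\geq2}t_n x^n/n!$ one has $xT'(x)=-x+t_2x^2+\cdots$ while $x\exp(y(x))=x-x^2+\cdots$, so the identity already fails at order $x$; and flipping the right-hand side to $-x\exp(y)$ still fails at order $x^3$ (the linear term of $y$ flips as well, and with $t_2=t_3=1$ the cubic coefficients come out as $+1/2$ on the left against $-1/2$ on the right). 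The honest resolution is that the functional equation is the one for the standard exponential generating series whose linear term is $+x$; for that series one checks $t_n=1$ for $n\geq 2$, hence $T=e^x-1$, $xT'=xe^x$ and $y(x)=x$, consistent with $\mu(\HTjpu)=(-1)^jj^{j-1}$. The $-x$ in the paper's definition of $T$ (and the subsequent claim $T=1-e^{-x}$) are sign slips that should be flagged and corrected in the statement, not absorbed into the weighting scheme of the proof.
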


We hence obtain:
\begin{equation*}
x= \sum_{j \geq 1} \mu(\HTjpu) \frac{x^j e^{-jx}}{j!}. 
\end{equation*}

This proves the following theorem by J. McCammond and J. Meier: 
\begin{thm}[Theorem 5.1 in \cite{McCM}]\label{McCM}
The Moebius number of the augmented hypertree poset on $n$ vertices is given by: 
\begin{equation*}
\mu(\HTn) = (-1)^{n-1} (n-1)^{n-2}. 
\end{equation*}
\end{thm}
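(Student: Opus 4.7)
The strategy is to combine the recursive identity forced on $\mu(\HTn)$ by $\zeta \ast \mu = \epsilon$ with Kalikow's functional equation for hypertree generating series, and then to extract the closed form by a single application of the Lagrange inversion formula.

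First, I would evaluate $\zeta \ast \mu = \epsilon$ on $\HTn$ through Proposition \ref{propdec}, noting that $\epsilon_\zeta = 1$ and $\epsilon_\mu = -1$ because $\mu(\HTn) = -s(h_n)$. Combining this with Lemma \ref{LMcCM}(b) (which identifies the right-hand factors of the coproduct as products of hypertree posets indexed by the edges of $\tau$) and the multiplicativity of the sum function on products of posets with a unique minimum, I would obtain, for every $n \geq 2$, the recursive identity
\begin{equation*}
\sum_{\tau \in \htn} \prod_{e \in \tau} \bigl(-\mu(\widehat{HT_{|e|}})\bigr) = 1,
\end{equation*}
where the product runs over the edges of $\tau$.

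Next, define $U(x)$ as the exponential generating series of rooted hypertrees weighted by $-\mu(\widehat{HT_i})$ per edge of size $i$. A hypertree on $n$ vertices admits $n$ different rootings, so the identity above forces the coefficient of $x^n/n!$ in $U(x)$ to equal $n$ for every $n \geq 2$; together with the one-vertex term this gives $U(x) = xe^x$. Kalikow's theorem, expressed in terms of $U$, then reads
\begin{equation*}
U(x) = x \exp\!\Biggl(\sum_{j \geq 1} \bigl(-\mu(\HTjpu)\bigr) \frac{U(x)^j}{j!}\Biggr),
\end{equation*}
and substituting $U(x) = xe^x$ followed by taking the logarithm reduces this to $x = \sum_{j \geq 1}\bigl(-\mu(\HTjpu)\bigr) (xe^x)^j/j!$. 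Setting $v = xe^x$, this is the expansion of $x$ in powers of $v$; since $v = xe^x$ is of the Lagrange form $x = v\phi(x)$ with $\phi(x) = e^{-x}$, Lagrange inversion yields
\begin{equation*}
[v^j]\,x \;=\; \frac{1}{j}\,[x^{j-1}]\,e^{-jx} \;=\; \frac{(-1)^{j-1}\,j^{j-2}}{(j-1)!}.
\end{equation*}
Matching coefficients gives $-\mu(\HTjpu) = (-1)^{j-1}j^{j-1}$, and the substitution $n = j + 1$ produces the desired formula $\mu(\HTn) = (-1)^{n-1}(n-1)^{n-2}$.

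The main obstacle is the careful tracking of signs: the factor $\epsilon_\mu = -1$ from Proposition \ref{propdec}, the weighting chosen in the definition of $U$, and the sign in $\phi(x) = e^{-x}$ appearing in Lagrange inversion all interact, and an error in any one of them flips the $(-1)^{n-1}$ factor in the conclusion. Once $U(x) = xe^x$ has been correctly established, the inversion step itself is routine, and the answer $(n-1)^{n-2}$ is recognizable as Cayley's classical enumeration of labelled trees on $n-1$ vertices.
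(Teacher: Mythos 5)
Your proposal is correct and follows essentially the same route as the paper: the identity $\zeta \ast \mu = \epsilon$ evaluated through Proposition \ref{propdec} and Lemma \ref{LMcCM}(b) yields the recursion $\sum_{\tau \in \htn}\prod_{e}\bigl(-\mu(\widehat{HT_{|e|}})\bigr)=1$, which is packaged into a generating series, fed into Kalikow's functional equation, and inverted. Your reformulation in terms of the rooted series $U(x)=xT'(x)=xe^{x}$, with the Lagrange inversion step made explicit, is in fact cleaner and more sign-consistent than the paper's own computation of $T(x)$.
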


As the homology of the augmented hypertree poset is concentrated in top degree, this Moebius number is also the dimension of the only homology group of the hypertree poset. The action of the symmetric group on this homology group has been computed in \cite{mar2}. 

		\subsubsection{Left-sided computation}
		
Applying the Moebius function at the left side of the coproduct, we obtain:
\begin{equation*}
\mu \ast \zeta (\widehat{h_n})= 0,
\end{equation*}
for all $n \geq 2$. 

By Proposition \ref{propdec}, this can be rewritten for all $n \geq 2$ as: 
\begin{equation*}
0 = \sum \widetilde{\mu}(h_n^{(1)}) \widetilde{\zeta}(h_n^{(2)}) - \widetilde{\mu}(h_n). 
\end{equation*}

The formula \eqref{defCoprod} for the coproduct gives:
\begin{equation*}
\mu(\HTn) = -\sum_{(\alpha, \pi)\in \mathcal{P}_n} c^n_{\alpha, \pi} \prod (-1)^{(i-1)\alpha_i} (i-1)!^{\alpha_i} 
\end{equation*}

Using Theorem \ref{McCM} and Theorem \ref{thmcoprod}, we obtain the following proposition: 
\begin{prop}The following equality holds:
\begin{equation*}
(n-1)^{n-2} = \sum_{(\alpha, \pi) \in \mathcal{P}(n)} \frac{(-1)^{i \alpha_i-1}}{n} \times \frac{n!}{\prod_{j \geq 2} (j-1)!^{\pi_j} \pi_j!} \times \frac{k! \times n!}{\prod_{i \geq 1} \alpha_i!},
\end{equation*}
where $\mathcal{P}(n)$ is the set of pairs of tuples $\left( \alpha = (\alpha_1, \dots, \alpha_k),\pi=(\pi_2, \dots, \pi_l) \right)$ satisfying:
\begin{equation*}
\sum_{i=1}^k \alpha_i = n, \quad \sum_{j=2}^{l} (j-1) \pi_j = n-1, \quad \text{and} \quad \sum_{i=1}^k i \alpha_i = n + \sum_{j=2}^{l} \pi_j -1. 
\end{equation*} 
\end{prop}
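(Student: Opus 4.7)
The plan is to combine the identity $\mu \ast \zeta = \epsilon$, applied to the augmented hypertree poset $\widehat{h}_n$, with the closed coproduct formula of Theorem \ref{thmcoprod} and the known value of the Moebius number given by Theorem \ref{McCM}. This is exactly the left-sided companion of the right-sided computation already performed, and the proposition is obtained by reading off the resulting identity and simplifying.

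Concretely, I would first compute the constants $\epsilon_\mu$ and $\epsilon_\zeta$ needed to invoke Proposition \ref{propdec}. Since $\widetilde{\mu}(h_j)=s(h_j)=-\mu(\widehat{h}_j)$ and $\widetilde{\zeta}(h_j)=\zeta(\widehat{h}_j)=1$, one gets $\epsilon_\mu = -1$ and $\epsilon_\zeta = 1$. Proposition \ref{propdec} then yields, for $n\geq 2$,
\begin{equation*}
0 \;=\; \mu\ast\zeta(\widehat{h}_n) \;=\; \sum \widetilde{\mu}(h_n^{(1)})\,\widetilde{\zeta}(h_n^{(2)}) \;-\; \widetilde{\mu}(h_n),
\end{equation*}
and since $\widetilde{\zeta}$ takes the value $1$ on every generator, this simplifies to $\widetilde{\mu}(h_n) = \sum \widetilde{\mu}(h_n^{(1)})$. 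On the left, $\widetilde{\mu}(h_n)=s(h_n)=-\mu(\widehat{HT}_n)=(-1)^n(n-1)^{n-2}$ by Theorem \ref{McCM}.

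Next I would expand the right-hand side via the explicit coproduct of Theorem \ref{thmcoprod}, noting that the left tensor factor is $p_\alpha=\prod_i p_i^{\alpha_i}$, so $\widetilde{\mu}(p_\alpha)=\prod_i\bigl((-1)^{i-1}(i-1)!\bigr)^{\alpha_i}=(-1)^{\sum_i(i-1)\alpha_i}\prod_i (i-1)!^{\alpha_i}$. The relations in \eqref{crit} give the crucial simplification $\sum_i(i-1)\alpha_i = \sum_j\pi_j-1 = k$, and the factor $\prod_i (i-1)!^{\alpha_i}$ will cancel exactly the matching denominator in $c^n_{\alpha,\pi}$, leaving the cleaner expression $\frac{k!\,n!}{\prod_i\alpha_i!}$ present in the statement. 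Putting everything together one obtains
\begin{equation*}
(-1)^n(n-1)^{n-2} \;=\; \sum_{(\alpha,\pi)\in\mathcal{P}(n)} (-1)^k \,\frac{1}{n}\,\frac{n!}{\prod_{j\geq 2}(j-1)!^{\pi_j}\pi_j!}\,\frac{k!\,n!}{\prod_{i\geq 1}\alpha_i!}.
\end{equation*}

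Finally, I would multiply through by $(-1)^n$ and convert the exponent using the third equation of \eqref{crit}, namely $\sum_i i\alpha_i = n+k$, so that $(-1)^{n+k}=(-1)^{\sum_i i\alpha_i}$, producing the sign factor appearing in the proposition. There is no real obstacle here; the whole argument is bookkeeping on top of the theorems already established. The only delicate point is keeping the signs straight, both the sign $\epsilon_\mu=-1$ coming from the relation $s(h)=-\mu(\widehat{h})$ and the sign arising from converting $\sum(i-1)\alpha_i$ to $\sum i\alpha_i$ via the criterion \eqref{crit}; everything else is formal substitution.
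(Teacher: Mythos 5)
Your proof follows exactly the paper's own left-sided computation: apply $\mu\ast\zeta=\epsilon$ to $\widehat{h}_n$ via Proposition \ref{propdec}, substitute the explicit coproduct of Theorem \ref{thmcoprod} together with $\mu(\Pi_i)=(-1)^{i-1}(i-1)!$, and identify the left-hand side using Theorem \ref{McCM}, so the approach is the same and correct. Your sign bookkeeping is in fact the more reliable of the two: you obtain $(-1)^{n+k}=(-1)^{\sum_i i\alpha_i}$, whereas the proposition as printed carries $(-1)^{i\alpha_i-1}$, and the paper's own example ($-1+12-12-8=-9$ for $n=4$, labelled $\mu(\widehat{HT_4})$) shows that this latter sign reproduces $\mu(\widehat{HT_n})=(-1)^{n-1}(n-1)^{n-2}$ rather than $(n-1)^{n-2}$, so your version of the sign is the one that makes the displayed identity hold for all $n$.
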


\begin{proof}
This comes from the computation of the coproduct, combined with the Moebius numbers of the augmented hypertree posets and of the partition posets. Indeed, the Moebius number of the partition poset on $n$ elements is given by $(-1)^{n-1} (n-1)!$. 
\end{proof}

\begin{exple}
The first terms obtained are: 
\begin{equation*}
\mu(\HTq)=-1+12-12-8=-9
\end{equation*}
and
\begin{equation*}
\mu(\HTc)=-1+20+12-120-60+60+120+30 = 64. 
\end{equation*}

\end{exple}

\bibliographystyle{alpha}
\bibliography{bibli}

\end{document}